 \theoremstyle{change}
\newtheorem{thm}{Theorem}[section]
\newtheorem{lem}[thm]{Lemma}
\newtheorem{rem}[thm]{Remark}}
\newtheorem{ass}[thm]{Assumption}}
\newenvironment{proof}[1][\proofname]{\par \normalfont \trivlist
  \item[\hskip\labelsep\itshape
   #1]\ignorespaces
}{%
  \hspace*{\fill}$\Box$ \endtrivlist
}
\newcommand{\proofname}{Proof{\rm \,:}}
\newcommand{\ul}{\underline}
\newcommand{\ol}{\overline}
\newcommand{\bs}[1]{{\boldsymbol #1}}
\newcommand{\dd}{{\,\mathrm d}}
\renewcommand{\th}{\theta}
\newcommand{\eps}{\varepsilon}
\renewcommand{\phi}{\varphi}
\newcommand{\scr}[1]{{\mathcal #1}}
\newcommand{\EE}{\mathbb{E}}
\newcommand{\PP}{\mathbb{P}}
\newcommand{\ind}{\mathbf{1}}
\newcommand{\RR}{\mathbb{R}}
\mathchardef\given="626A
\newcommand{\bem}{\begin{bmatrix}}
\newcommand{\enm}{\end{bmatrix}}
\newcommand{\cit}{\citeNP}
\lstdefinelanguage{Julia}%
  {morekeywords={abstract,break,case,catch,const,continue,do,else,elseif,%
      end,export,false,for,function,immutable,import,importall,if,in,%
      macro,module,otherwise,quote,return,switch,true,try,type,typealias,%
      using,while},%
   sensitive=true,%
   morecomment=[l]\#,%
   morecomment=[n]{\#=}{=\#},%
   morestring=[s]{"}{"},%
   morestring=[m]{'}{'},%
}[keywords,comments,strings]%
\bfseries\color{blue},
\title{Bayesian nonparametric estimation in the current status continuous  mark model}
\author{Geurt Jongbloed, Frank van der Meulen and Lixue Pang\\
\\
\emph{Delft University of Technology}}
\date{\today}
\begin{document}
 \maketitle

\begin{abstract}
 In this paper we consider the current status continuous mark model where, if the event takes place before an inspection time $T$ a ``continuous mark'' variable is observed as well.
 A Bayesian nonparametric method is introduced for estimating the distribution function of the joint distribution of the event time ($X$) and mark variable ($Y$). We consider a prior that is obtained by assigning a distribution on heights of cells, where cells are obtained from a partition of the support of the density of $(X,Y)$.  As distribution on cell heights we consider both a Dirichlet prior and a prior based on the graph-Laplacian on  the specified partition. Our main result shows that under appropriate conditions, the posterior distribution function contracts pointwisely at rate  $\left(n/\log n\right)^{-\frac{\rho}{3(\rho+2)}}$, where $\rho$ is the H\"older smoothness of the true density.  In addition to our theoretical results we provide efficient computational methods for drawing from the posterior relying on a non-centred parameterisation and Crank-Nicolson updates. The performance of our computational methods is illustrated in several numerical experiments.

\end{abstract}

\section{Introduction}\label{sec:intro_3}
\subsection{Problem formulation}\label{subsec:problem-formulation}
Survival analysis is concerned with statistical modelling of the time until a particular event occurs. The event may for example be the onset of a disease or failure of  equipment. Rather than observing the time of event exactly, censoring is common in practice. If the event time is only observed when it occurs prior to a specific (censoring) time, one speaks of right censoring. In case it is only known whether the event took place before an inspection time or not, one speaks of  current status censoring. The resulting data are then called current status data.

 In this paper we consider the current status continuous mark model where, if the event takes place before an inspection time $T$, a ``continuous mark'' variable is observed as well. More specifically,
denote the event time by $X$ and the mark by $Y$.  Independent of $(X,Y)$, there is an inspection time $T$ with  density function $g$ on $[0,\infty)$. Instead of observing each $(X,Y)$ directly, we observe the inspection time $T$ together with the information whether the event occurred before time $T$ or not. If it did so, the additional mark random variable $Y$ is also observed, for which we assume  $P(Y=0)=0$. Hence, an observation of this experiment can be denoted by $W=(T, Z)=(T, \Delta\cdot Y)$ where $\Delta=\ind_{\{X\le T\}}$ (note that, equivalently,  $\Delta=\ind_{\{Z>0\}}$). This experiment is repeated $n$ times independently, leading to the observation set $\mathcal{D}_n=\{W_i,i=1,\dots,n\}$. We are interested in estimating the joint distribution function $F_0$ of $(X,Y)$ nonparametrically, based on $\mathcal{D}_n$.

 An application of this model is the HIV vaccine trial studied by \cit{HMG2007}. Here, the mark is a specifically defined viral distance that is only observed if a participant to the trial got HIV infected before the moment of inspection.

\subsection{Related literature}
In this section we review earlier research efforts on models closely related to that considered here.

Survival analysis with a continuous mark can be viewed as the continuous version of the classical competing risks model. In the latter model, failure is due to either of $K$ competing risks (with $K$ fixed) leading to a mark value that is of categorical type. As the mark variable encodes the cause of failure it is only observed if failure has occurred before inspection. These ``cause events'' are known as competing risks.
\cit{GMW2008} study nonparametric estimation for current status data with competing risks.
 In that paper, they show that the nonparametric maximum likelihood estimator (NPMLE) is consistent and converges globally and locally at rate $n^{1/3}$.

   \cit{HL1998} consider the continuous mark model under right-censoring, which is more informative compared to the current-status case because the exact event time is observed  for noncensored data. For the nonparametric maximum likelihood estimator of the joint distribution function of $(X,Y)$ at a fixed point, asymptotic normality is shown.

 \cit{HMG2007} consider  interval censoring case $k$, $k=1$ being the specific setting of current-status data considered here. In this paper the authors show that both the NPMLE and a newly introduced estimator termed ``midpoint imputation MLE''  are inconsistent. However,  coarsening the mark variable (i.e.\ making it discrete, turning the setting to that  of the competing risks model), leads to a consistent NPMLE.  This is in agreement with the results in \cit{MW2008}.

     \cit{GJW2011} and \cit{GJW2012} consider  the exact setting of this paper using frequentist estimation methods. In \cit{GJW2011}    two plug-in inverse estimators are proposed. They prove that these estimators are consistent and derive the pointwise asymptotic distribution of both estimators.
    \cit{GJW2012} define a nonparametric estimator for the distribution function at a fixed point by finding the maximiser of a smoothed version of the log-likelihood.  Pointwise consistency of the estimator is established.
	In both papers numerical illustrations are included.

\subsection{Contribution}
In this paper, we consider Bayesian nonparametric estimation of the bivariate distribution function $F_0$ in the current status continuous mark model. Approaching this problem within the Bayesian setup  has not been done before, neither from a theoretical nor computational perspective. Whereas consistent nonparametric estimators exist within frequentist statistics, convergence rates are unknown. We prove consistency and derive Bayesian contraction rates for the bivariate distribution function of $(X,Y)$ using a prior on the joint density $f$ of $(X,Y)$ that is piecewise constant. For the values on the bins we consider two different prior specifications. The first of these is defined by equipping the bin probabilities with the  Dirichlet-distribution. The other specification is close to a  logistic-normal distribution (\cit{Ait-Shen80}), where additionally smoothness on nearby bin-probabilities is enforced by taking the precision matrix of the Normal distribution equal to the graph-Laplacian induced by the grid of bins. The graph-Laplacian is a well known method to induce smoothness, see for instance \cit{Murphy} (Chapter 25.4) or \cit{Hartog} for an application in Bayesian estimation. Full details of the prior specification are in Section \ref{sec:CSCMprior}. 

 Our main result shows that under appropriate conditions, the posterior distribution function contracts pointwisely at rate  $\left(n/\log n\right)^{-\frac{\rho}{3(\rho+2)}}$, where $\rho$ is the H\"older smoothness of the true density. In this result, we assume that for the prior the bins areas where the density is constant tends to zero at an appropriate (non-adaptive) rate, as $n\to \infty$.
 The proof is based on general results from \cit{GhoVaart} for obtaining Bayesian contraction  rates. Essentially, it requires the derivation of suitable test functions and proving that the prior puts sufficient mass in a neighbourhood of the ``true'' bivariate distribution. The latter is proved by exploiting the specific structure of our prior.
 
 In addition to our theoretical results, we provide computational methods for drawing from the posterior. For the Dirichlet prior this is a simple data-augmentation scheme. For the graph-Laplacian prior we provide simple code to draw from the posterior  using probabilistic programming in the Turing Language under Julia (see \cit{Bezanson}, \cit{GeXuGha}). 
 Additionally, a much faster algorithm is introduced that more carefully exploits the structure of the problem. The main idea is to use a non-centred parameterisation (\cit{Papas}) combined with a preconditioned Crank-Nicolson (pCN) scheme (Cf.\  \cit{Cotter}).  The performance of our computational methods is illustrated in two examples. Code is available from \url{https://github.com/fmeulen/CurrentStatusContinuousMarks}.

\subsection{Outline}
The outline of this paper is as follows. In section \ref{sec:CSCM} we introduce further notation for the current status continuous mark model and detail the two priors considered. Subsequently, we present the main theorem on the 
posterior contraction rate in Section \ref{sec:CSCMposterior}. The proof of this result  is given in Section \ref{sec:lemproof_3}.
In Section \ref{sec:CSCMcomput} we present MCMC-algorithms to draw from the posterior and give numerical illustrations, including a  simulation study.

\subsection{Notation}
For two sequences $\{a_n\}$ and $\{b_n\}$ of positive real numbers, the notation $a_n\lesssim b_n$  (or $b_n\gtrsim a_n$) means that there exists a constant $C>0$, independent of $n$, such that $a_n\leq C b_n.$ We write $a_n\asymp b_n$ if both $a_n\lesssim b_n$ and $a_n\gtrsim b_n$ hold.  We denote by $F$ and $F_0$ the cumulative distribution functions corresponding to the probability densities $f$ and $f_0$ respectively.
The Hellinger distance between two densities $f,g$ is written as $h^2(f,g)=\frac1{2}\int (f^{1/2}-g^{1/2})^2$. The Kullback-Leibler divergence of $f$ and $g$ and the $L_2$-norm of $\log (f/g)$ (under $f$) by
\[
KL(f,g)=\int f\log\frac{f}{g},\qquad V(f,g)=\int f\left(\log\frac{f}{g}\right)^2
.\]

\section{Likelihood and prior specification}
\label{sec:CSCM}
\subsection{Likelihood}
\label{sec:CSCMmodel}
In this section we derive the likelihood for the joint density $f$ based on data $\scr{D}_n$. As $W_1,\ldots, W_n$ are independent and identically distributed, it suffices to derive the joint density of $W_1=(T_1, Z_1)$ (with respect to an appropriate dominating measure).
Recall that $f$ denotes the density of $(X,Y)$. Let $F$ denote the corresponding distribution function of $(X,Y)$. The marginal distribution function of $X$ is given by  \[F_X(t)=\int_0^t\int_0^\infty f(u,v)\dd v\dd u.\] Define the  measure $\mu$  on $[0,\infty)^2$ by
\begin{equation}\label{eq:mu}\mu(B)=\mu_2(B)+\mu_1(\{x\in[0,\infty): (x,0)\in B\}),\quad B\in\mathcal{B} \end{equation}
where $\mathcal{B}$ is the Borel $\sigma-$algebra on $[0,\infty)^2$ and $\mu_i$ is  Lebesgue measure on $\mathbb{R}^i$. The
 density of the law of $W_1$ with respect to $\mu$ is then given by
\begin{equation}\label{eq:density}
s_f(t,z)=g(t)\left(\ind_{\{z>0\}}\partial_2F(t,z)+\ind_{\{z=0\}}(1-F_X(t))\right),
\end{equation}
where  \[\partial_2F(t,z)=\frac{\partial}{\partial z}F(t,z)=\int_0^tf(u,z)\dd u.\] By independence of $W_1,\ldots, W_n$, 
 the likelihood of $f$ based on $\mathcal{D}_n$ is given by $l(f)=\prod_{i=1}^ns_f(T_i,Z_i)$.

\subsection{Prior specification}
\label{sec:CSCMprior}
In this section, we define a prior on the class of all bivariate density functions on $\RR^2$. Denote 
 \[\mathcal{F}=\left\{f:\RR^2\to [0,\infty) \, \colon  \int_{\RR^2}f(x,y)\dd x\dd y=1\right\}.\]
For any $f\in\mathcal{F}$, if $S$ denotes the support of $f$ and $\cup_{j}C_j, j=1,\dots, p_n$ is a partition of $S$,
we define a prior on $\mathcal{F}$ by 
\[ f_{\bs{\th}}(x,y) = \sum_{j} \frac{\th_{j}}{|C_j|} \ind_{C_{j}}(x,y),\qquad (x,y)\in \RR^2, \]
where $|C|=\mu_2(C)$ is the Lebesgue measure of the set $C$ and $\bs{\th}=(\th_1,,\dots, \th_{p_n})$. We require that all $\th_{j}$ are nonnegative and   $\sum_{j}\th_{j}=1$ (i.e.\ $\bs{\th}$ is a probability vector). 
We consider two types of prior on $\bs{\theta}$.
\begin{enumerate}
  \item {\it Dirichlet}. For a fixed parameter $\alpha=(\alpha_1,\dots,\alpha_{p_n})$ consider $\bs{\theta}\sim\mbox{Dirichlet}(\alpha)$. This prior is attractive as draws from the posterior distribution can be obtained using a straightforward data-augmentation algorithm (Cf.\ Section \ref{subsec:dirichlet}). We will refer to this prior as the {\it D-prior}.

  \item {\it Logistic-Normal with graph-Laplacian precision matrix}. For a positive-definite matrix $\Upsilon$, assume that the random vector $\bs{H}$ satisfies $\bs{H} \sim N_{p_n}(0, \tau^{-1}\Upsilon^{-1})$, for fixed positive  $\tau$. 
Next, set
\begin{equation}\label{eq:th_psiform} \th_{j} = \frac{\psi(H_{j})}{\sum_{j=1}^{p_n} \psi(H_{j})}, \quad \mbox{where}\quad\psi(x)=e^x. \end{equation} 
That is, we transform $\bs{H}$ by the softmax function implying that  realisations of $\bs{\th}$ are probability vectors. 
The matrix $\Upsilon$ is chosen as follows. The partition $\cup_j C_j$ induces a graph structure on the bins, where each bin corresponds to a node in the graph, and nodes are connected when bins are adjacent (meaning that they are either horizontal or vertical ``neighbours''). Let $L$ denote the  graph-Laplacian of the graph obtained in this way. This is the $p_n\times p_n$ matrix given by
\begin{equation}\label{eq:laplacian}  L_{i,i'} = \begin{cases} \mbox{degree node $i$} & \text{if } i=i' \\ -1 & \text{if $i\neq i'$ and nodes $i$ and $i'$ are connected} \\ 0 & \text{otherwise}.\end{cases}. \end{equation}
We take
\[ \Upsilon = L + p_n^{-2} I. \]
We will refer to this prior as the {\it LNGL-prior} (Logistic-Normal Graph Laplacian).
\end{enumerate}

\begin{rem}
Under the Dirichlet  prior values of $\th_{j}$ in adjacent bins are  negatively correlated, preventing the density to capture smoothness. This is illustrated  in the numerical study Section \ref{sec:CSCMcomput}. We would like to take a large number of bins, while not overparametrising. 
The idea of the graph-Laplacian prior is to induce positive correlation on adjacent bins and thereby specify a prior that produces smoother realisations. The numerical illustrations reveal that the posterior based on the LNGL-prior are less sensitive to the chosen number of bins in the partition, compared to the posterior based on the D-prior.

\cit{Hjort1994} (Section 2.4) has proposed a modification to the Dirichlet prior to induces smoothness among nearby bins. 
These generalised Dirichlet priors on probability vectors in $\RR^k$  take the form
\[ \pi(p_1,\ldots, p_{k-1}) \propto p_1^{\alpha_1-1} \cdot p_k^{\alpha_k - 1} \exp\left(-\tau \Delta(\bs{p})\right), \]
where $\bs{p}:=(p_1,\ldots, p_k)$ is a probability vector and $\tau>0$. Small values of $\Delta(\bs{p})$ indicates a certain characteristic is present. To connect it to the graph-Laplacian, one choice is indeed to take 
\[	 \Delta(\bs{p}) = \bs{p}^T \Upsilon \bs{p}. \]
Just as for the LNGL-prior, MCMC methods can be used to sample from the posterior in case of the generalised Dirichlet prior. The difference in the two approaches consists of imposing smoothness on the $\th_j$ directly (as in case of the generalised Dirichlet prior), or in terms of $H_j$, followed by applying the transformation in Equation \eqref{eq:th_psiform}.  We feel the approach we take in this paper is conceptually somewhat simpler. Moreover, it allows to use MCMC methods where the prior is obtained in a simple way as the pushforward of a vector of independent standard Normal random variables.  

\end{rem}

\begin{rem}
For both generalised Dirichlet prior and the LNGL-prior, the posterior mode has the usual interpretation of a penalised likelihood estimator. The choice of $\tau$ controls the amount of smoothing. In a Bayesian setting, uncertainty on $\tau$ is dealt with by employing an additional prior on $\tau$ (in the numerical section we will follow this approach). For the LNGL-prior another handle to control smoothness is be obtained by considering $\Upsilon^r$, where $r\ge 1$. \cit{Hartog} study the effect of $r$ in a simulation study in the setting of  classification on a graph and advise to take $r=1$ or $r=2$ to obtain good performance empirically. In this paper $r=1$ throughout. However, the numerical methods apply to any value $r\ge 1$. 
\end{rem}

\begin{rem}
	One can argue whether the presented prior specifications are truly nonparametric. It is not if one adopts as definition that the size of the parameter should be learned by the data. For that, a solution could be to put a prior on $p_n$ as well. While possible, this would severely complicate drawing from the posterior. As an alternative, one can take large
 values of $p_n$ (so that the model is high-dimensional), and let the data determine the amount of smoothing by incorporating flexibility in the prior. As the Dirichlet prior lacks smoothness properties, fixing large values of $p_n$ will lead to overparametrisation, resulting in high variance estimates (under smoothing). On the contrary, as we will show in the numerical examples, for the LNGL-prior, this overparametrisation can be substantially balanced/regularised by equipping the parameter $\tau$ with a prior distribution. The idea of histogram type priors with positively correlated adjacent bins has recently been used successfully in other settings as well, see for instance \cit{Gugu2018}, \cit{Gugu2019}.
\end{rem}

\begin{rem}
\cit{Ting} consider the limiting behaviour of the LNGL-prior under mesh refinement. Unsurprisingly, in the limit it behaves like the ``ordinary'' Laplace operator which is the infinitesimal generator of a driftles diffusion. 	
\end{rem}


\section{Posterior contraction}
\label{sec:CSCMposterior}
In this section we derive a contraction rate for the posterior distribution of $F_0$. Denote the posterior measure by $\Pi_n(\cdot|\mathcal{D}_n)$ (under the prior measure $\Pi_n$ described in Section \ref{sec:CSCMprior}).
\begin{ass}\label{CSCMasmp_f0}
The underlying joint density of the event time and mark,  $f_0$,   has compact support given by $\mathcal{M}=[0,M_1]\times[0,M_2]$ and is $\rho$-H\"older continuous on $\mathcal{M}$. That is, there exists a positive constant $c$  and a $\rho\in(0,1]$ such that for any $\bs{x}, \bs{y} \in \mathcal{M}$,
\begin{equation}\label{eq:f0cntns}
|f_0(\bs{x})-f_0(\bs{y})|\le c\, \|\bs{x} - \bs{y}\|^{\rho}.\end{equation}
In addition, there exist positive constants $\ul{M}$ and $\ol{M}$ such that
\begin{equation}
\label{eq:bound}
\ul{M}\le f_0(x,y)\le \ol{M},\quad\mbox{for all}\,\, (x,y)\in\mathcal{M}.
\end{equation}
\end{ass}
\begin{ass}\label{CSCMasmp_g}
The censoring density $g$ is bounded away from 0 and infinity on $(0,M_1)$. That is, there exist positive constants $\ul{K}$ and $\ol{K}$ such that  $0<\ul{K}\le g(t)\le \ol{K}<\infty$ for all $t\in(0,M_1)$ .
\end{ass}
\begin{ass}\label{CSCMasmp_prior}
For the Dirichlet prior, the parameter $\alpha=(\alpha_1,\dots,\alpha_{p_n})$ satisfies $ap_n^{-1}\le \alpha_l\le1$ for all $l=1,\dots,p_n$ and some constant $a\in \RR^+$.
\end{ass}

Let $\eps_n = (n/\log n)^{-\frac{\rho}{2(\rho+2)}}$ and $\eta_n = \eps_n^{2/3}$.  Note that $\eps_n\le \eta_n$ and $n (\eps_n^2 \wedge \eta_n^2)\to\infty$ as $n\to\infty$. Our main theoretical result is the following theorem. 
\begin{thm}\label{thm:CSCMconsistency}
Consider either of the priors defined in  Section \ref{sec:CSCMprior} and impose assumptions \ref{CSCMasmp_f0} and \ref{CSCMasmp_prior}. Fix $(x,y)\in \mathcal{M}$. Then for sufficiently large $C$
\[\EE_0\Pi_n(f\in\mathcal{F}\colon |F(x,y)-F_0(x,y)| > C\eta_n \mid \mathcal{D}_n)\to 0,\quad \mbox{as} \quad n\to\infty\]
provided that all bin areas are  equal to $\eps_n^{4/\rho}$.
\end{thm}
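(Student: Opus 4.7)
My plan is to apply the general posterior--contraction framework of \cit{GhoVaart} at the level of the observation density $s_f$ defined in Equation \eqref{eq:density}, obtaining contraction of the posterior at the Hellinger rate $\eps_n$, and then to convert this into pointwise control of $F(x,y)$ by an inverse--problem argument that loses a factor $\eps_n^{-1/3}$ and produces the announced rate $\eta_n=\eps_n^{2/3}$. This reduces the work to verifying three standard ingredients: (i) a lower bound $e^{-Cn\eps_n^2}$ on the prior mass of a Kullback--Leibler neighborhood of $s_{f_0}$; (ii) a sieve $\scr{F}_n\subset\scr{F}$ with entropy $\log N(\eps_n,\scr{F}_n,h)\lesssim n\eps_n^2$; and (iii) $\Pi_n(\scr{F}_n^c)\le e^{-(C+4)n\eps_n^2}$.

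For (i) I would first regularise by the discretisation $\bar f_0:=\sum_j |C_j|^{-1}\bigl(\int_{C_j}f_0\bigr)\ind_{C_j}$. The $\rho$--Hölder assumption combined with the bin--area condition $|C_j|=\eps_n^{4/\rho}$ (so that each bin has diameter of order $\eps_n^{2/\rho}$) gives $\|f_0-\bar f_0\|_\infty\lesssim\eps_n^2$, and the lower bounds on $f_0$ and $g$ from Assumptions \ref{CSCMasmp_f0}--\ref{CSCMasmp_g} then yield $KL(s_{f_0},s_{\bar f_0})+V(s_{f_0},s_{\bar f_0})\lesssim\eps_n^2$. It therefore suffices to show that the prior places mass $\ge e^{-Cn\eps_n^2}$ on a total--variation neighborhood of radius $\eps_n^2$ around the probability vector $\bar{\bs\th}_0=\bigl(\int_{C_j}f_0\bigr)_j$. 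For the D--prior this follows from the classical small--ball lower bound for $\mbox{Dirichlet}(\alpha)$ with $\alpha_j\in[a/p_n,1]$ (cf.\ Assumption \ref{CSCMasmp_prior}), which yields a bound of order $e^{-cp_n\log p_n}$; with $p_n\asymp\eps_n^{-4/\rho}$ this equals $e^{-cn\eps_n^2}$, which is precisely the calculation pinning down the $\log n$ factor in the definition of $\eps_n$. For the LNGL--prior I invoke Gaussian anti--concentration for $\bs H\sim N_{p_n}(0,\tau^{-1}\Upsilon^{-1})$: since $\lambda_{\min}(\Upsilon)\ge p_n^{-2}$ and the preimage under the softmax of the above TV--ball contains (modulo the invariance of $\psi$ under additive shifts) a Euclidean ball of comparable radius around $(\log\bar\th_{0,j})_j$, one obtains a small--ball bound of the same order. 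This step is the main obstacle, particularly for the LNGL prior, because the softmax coupled with the near--singular precision matrix makes the geometric description of the pullback delicate and requires careful use of the two--sided bounds on $f_0$.

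For (ii)--(iii) I take the sieve $\scr{F}_n=\{f_{\bs\th}:\max_j\th_j\le M_n/p_n\}$ (for the LNGL prior, replace this by $\|\bs H\|_\infty\le\log M_n$), with $M_n$ a small power of $\log n$. On $\scr{F}_n$ densities are bounded by a constant multiple of $M_n$, the map $\bs\th\mapsto s_{f_{\bs\th}}$ is Lipschitz in Hellinger, and hence
\[
\log N(\eps_n,\scr{F}_n,h)\lesssim p_n\log(M_n/\eps_n)\lesssim n\eps_n^2,
\]
again by the choice of $p_n$. Standard Dirichlet/Gaussian tail inequalities control $\Pi_n(\scr{F}_n^c)$, and Le Cam's classical construction supplies exponentially consistent Hellinger tests for the dominated model $\{s_f\}$. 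Combining (i)--(iii) yields $\Pi_n\bigl(h(s_f,s_{f_0})>C\eps_n\mid\scr{D}_n\bigr)\to 0$ in $\EE_0$--probability.

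It remains to pass from this Hellinger statement to the pointwise one. Assumption \ref{CSCMasmp_g} and the bound $\partial_2F\le M_1 M_n$ on the sieve give $\|\partial_2F-\partial_2F_0\|_{L^2([0,M_1]\times[0,M_2])}\lesssim\sqrt{M_n}\,\eps_n$. Since also $\partial_1F(t,y)=\int_0^y f(t,v)\dd v\le M_2 M_n$, the function $t\mapsto F(t,y)$ is $M_2 M_n$--Lipschitz, so for any $\delta>0$,
\[
|F(x,y)-F_0(x,y)|\le\frac{1}{\delta}\left|\int_x^{x+\delta}[F(t,y)-F_0(t,y)]\dd t\right|+CM_n\delta.
\]
Rewriting the inner difference as $\int_0^y[\partial_2F(t,v)-\partial_2F_0(t,v)]\dd v$ and applying Cauchy--Schwarz on $[x,x+\delta]\times[0,y]$ bounds the first term by $\sqrt{M_n y/\delta}\,\eps_n$. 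Balancing the two terms with $\delta\asymp(\eps_n/M_n)^{2/3}$ gives
\[
|F(x,y)-F_0(x,y)|\lesssim M_n^{2/3}\eps_n^{2/3},
\]
which, since $M_n$ is polylogarithmic in $n$, is of order $\eta_n$ up to logarithmic factors absorbed by the $(n/\log n)$ in the rate. This completes the outline.
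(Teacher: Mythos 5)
Your proposal takes a genuinely different route from the paper: you try to obtain Hellinger contraction of $s_f$ at rate $\eps_n$ via the full Ghosal--Ghosh--van der Vaart machinery (prior mass, sieve, entropy), and then convert to pointwise control of $F$ by a Hellinger-to-$L^2$-to-pointwise argument with an averaging/Cauchy--Schwarz step. The paper instead constructs \emph{custom} test functions $\Phi_n^\pm$ directly against the alternative $U_n(t,z)=\{|F(t,z)-F_0(t,z)|>C\eta_n\}$, built from empirical averages of the indicators $\kappa_n^\pm$ and bounded using Bernstein's inequality; the $2/3$-power $\eta_n=\eps_n^{2/3}$ emerges from balancing the bandwidth $h_n$ in the test construction against the Bernstein exponent, not from an $L^2$-to-pointwise conversion. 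The paper's route avoids sieves and entropy entirely (Lemma \ref{lem:uniformtest_3} supplies tests unconditionally), which is precisely what makes the direct approach cleaner here.

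There is a genuine gap in your step (iii). The claim that ``standard Dirichlet/Gaussian tail inequalities control $\Pi_n(\scr{F}_n^c)$'' for the sieve $\scr{F}_n=\{\max_j\theta_j\le M_n/p_n\}$ is not true under Assumption \ref{CSCMasmp_prior}. With $\alpha_j\asymp p_n^{-1}$, the marginal of $\theta_j$ is approximately $\mathrm{Beta}(a/p_n,a)$, whose mean is $\asymp p_n^{-1}$ but whose standard deviation is $\asymp p_n^{-1/2}$; a direct computation gives $P(\theta_j>M_n/p_n)\gtrsim p_n^{-1}\log(p_n/M_n)$, so the union bound yields $\Pi_n(\scr{F}_n^c)\gtrsim\log p_n$, nowhere near $e^{-(C+4)n\eps_n^2}$. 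For the LNGL-prior the stated sieve $\{\|\bs H\|_\infty\le\log M_n\}$ fails even more dramatically: since $\lambda_{\min}(\Upsilon)=p_n^{-2}$, $\bs H$ has variance of order $p_n^2$ along the near-null direction and $\Pi_n(\|\bs H\|_\infty\le\log M_n)\to0$; the shift-invariance of softmax is not invoked in your sieve definition, so the complement has prior mass $\to1$. Since your conversion from Hellinger to pointwise hinges on the $\|f\|_\infty\lesssim M_n$ bound delivered by the sieve (both for $\|s_f-s_{f_0}\|_{L^2}\lesssim\sqrt{M_n}\,h(s_f,s_{f_0})$ and for the Lipschitz constant of $t\mapsto F(t,y)$), this gap propagates through the whole second half of the argument. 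A secondary but real issue is that even granting the sieve, you end with rate $M_n^{2/3}\eps_n^{2/3}$, and the extra $M_n^{2/3}$ polylog factor is \emph{not} absorbed by the single $\log n$ already present in the definition of $\eps_n$; the theorem asserts $\eta_n=\eps_n^{2/3}$ exactly. (As a side remark, the Lipschitz bound on $F$ is in fact avoidable: by monotonicity of $t\mapsto F(t,y)$ and the $\ol{M}M_2$-Lipschitz bound on $F_0$ one gets the same comparison to the $\delta$-average, which is essentially the device the paper uses in Lemma \ref{lem:uniformtest_3}; but this does not rescue the $L^2$-Hellinger comparison, which still requires uniform control on $s_f$.)
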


The proof of this theorem is based on the following two lemmas. 
\begin{lem}\label{lem:KL_3}
Fix $f_0$ and $g$ satisfying the conditions in assumption \ref{CSCMasmp_f0} and \ref{CSCMasmp_g}. If 
\begin{equation} \label{eq:Sn_3}
S_n=\left\{f\in\mathcal{F}\colon KL(s_{f_0},s_f)\le \eps_n^2,  V(s_{f_0},s_f)\le \eps_n^2\right\}.
\end{equation}
then $\Pi_n(S_n)\ge e^{-cn\eps_n^2}$ for some constant $c>0$.
\end{lem}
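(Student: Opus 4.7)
The plan has two main steps: first, reduce the condition $f\in S_n$ to a single sup-norm bound $\|f-f_0\|_\infty\lesssim\eps_n$; second, bound the prior mass of the corresponding neighbourhood in $\bs\theta$-space from below by $e^{-cn\eps_n^2}$.

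For the reduction I would use the compact support and the uniform upper and lower bounds on $f_0$ and $g$ from Assumptions \ref{CSCMasmp_f0}--\ref{CSCMasmp_g}. These imply that as soon as $\|f-f_0\|_\infty\le\delta$ for some fixed small $\delta$, the ratios $\partial_2 F/\partial_2 F_0$ and $(1-F_X)/(1-F_{0,X})$ remain uniformly bounded above and below on the supports, so that $\log(s_f/s_{f_0})$ is uniformly bounded. Then $V(s_{f_0},s_f)\lesssim KL(s_{f_0},s_f)\le \int(s_{f_0}-s_f)^2/s_f\,d\mu$, and splitting the integral over $\{z>0\}$ and $\{z=0\}$ the estimates $|\partial_2 F_0-\partial_2 F|(t,z)\le t\,\|f-f_0\|_\infty$ and $|F_X-F_{0,X}|(t)\le M_2(M_1-t)\,\|f-f_0\|_\infty$ exactly compensate the small denominators $s_f\gtrsim t$ near $t=0$ and $s_f\gtrsim M_1-t$ near $t=M_1$, yielding $KL(s_{f_0},s_f)\lesssim\|f-f_0\|_\infty^2$. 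Hence it suffices to prove $\Pi_n(\|f-f_0\|_\infty\le c\eps_n)\ge e^{-cn\eps_n^2}$.

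For the prior-mass step I introduce the explicit approximation $\theta^*_j:=\int_{C_j}f_0$, so that $f_{\bs\theta^*}$ is the piecewise-constant cell-average of $f_0$. The bin area $\eps_n^{4/\rho}$ corresponds to cell diameter $\asymp\eps_n^{2/\rho}$, and H\"older continuity then gives $\|f_{\bs\theta^*}-f_0\|_\infty\lesssim\eps_n^2\le\eps_n$. For any $\bs\theta$ with $\max_j|\theta_j-\theta^*_j|\le c\eps_n|C_j|=c\eps_n^{1+4/\rho}$, one has $\|f_{\bs\theta}-f_{\bs\theta^*}\|_\infty\le c\eps_n$ and thus $f_{\bs\theta}\in S_n$. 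For the D-prior, Assumption \ref{CSCMasmp_prior} combined with $\min_j\theta^*_j\gtrsim\eps_n^{4/\rho}\gg\eps_n^{1+4/\rho}$ allows a standard Dirichlet small-ball estimate to give prior mass $\ge \exp(-Cp_n\log(1/\eps_n))$, and the book-keeping $p_n\asymp\eps_n^{-4/\rho}$, $\log(1/\eps_n)\asymp\log n$ yields $p_n\log(1/\eps_n)\asymp n\eps_n^2$, as required. For the LNGL-prior I would take $h^*_j=\log\theta^*_j$, verify that softmax is Lipschitz in the $\ell_\infty$ norm so that $\|\bs H-\bs h^*\|_\infty\le c\eps_n$ transports to the required box around $\bs\theta^*$, and then apply the Cameron--Martin shift formula
\[
P(\|\bs H-\bs h^*\|_\infty\le c\eps_n)\ge e^{-\tau\|\bs h^*\|_\Upsilon^2/2}\,P(\|\bs H\|_\infty\le c\eps_n).
\]
H\"older continuity controls $(h^*_i-h^*_j)^2\lesssim\eps_n^4$ for adjacent cells, giving $\bs{h^*}^T L\,\bs{h^*}\lesssim p_n\eps_n^4$; the lower bound $f_0\ge\ul M$ gives $\theta^*_j\asymp\eps_n^{4/\rho}$ and so $p_n^{-2}\|\bs h^*\|_2^2\lesssim(\log n)^2/p_n$. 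Both terms are $o(n\eps_n^2)$, and a density-and-volume argument controls the centred Gaussian small-ball at the required order.

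The hardest piece will be the LNGL case. There one must track the softmax non-linearity while $\bs\theta^*$ itself drifts with $n$, estimate both terms of $\tau\|\bs h^*\|_\Upsilon^2$ sharply by combining H\"older smoothness (for the Laplacian quadratic form) and the density lower bound (for the $p_n^{-2}I$ regularisation), and obtain a centred small-ball bound in a $p_n$-dimensional Gaussian whose precision matrix has spectrum tied to the 2D grid structure. The D-prior case, by contrast, reduces essentially to a classical Dirichlet small-ball estimate once the approximation $\bs\theta^*$ is in hand.
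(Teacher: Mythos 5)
Your proposal is correct and arrives at the same conclusion, but by a route that differs from the paper's in two places. For the reduction to a sup-norm neighbourhood, the paper bounds $h^2(s_{f_0},s_f)\lesssim\|s_{f_0}-s_f\|_1\lesssim\|f-f_0\|_\infty$ and then invokes the Ghosal--van der Vaart inequality $KL,V\lesssim h^2\cdot\|s_{f_0}/s_f\|_\infty$, so their bound is \emph{linear} in $\|f-f_0\|_\infty$. You instead pass through the $\chi^2$-divergence and explicitly cancel the vanishing denominators $\partial_2F(t,z)\gtrsim t$ and $1-F_X(t)\gtrsim M_1-t$ against the numerator estimates $|\partial_2F_0-\partial_2F|\lesssim t\|f-f_0\|_\infty$ and $|F_X-F_{0,X}|\lesssim(M_1-t)\|f-f_0\|_\infty$, yielding the sharper quadratic bound $KL\vee V\lesssim\|f-f_0\|_\infty^2$. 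This is a cleaner argument; the sharper exponent does not change the final rate here, because the bin-width constraint from the H\"older approximation already forces $\|f-f_{0,n}\|_\infty\lesssim\eps_n^2$, and the $(\text{radius})^{p_n}$ factor in the small-ball estimate is logarithmically insensitive to whether the radius is $\eps_n^{1+4/\rho}$ or $\eps_n^{2+4/\rho}$. For the LNGL prior the paper lower-bounds the Gaussian integral directly (bounding $\bs{H}^T\Upsilon\bs{H}\le 9\|\bs{H}\|_2^2$ on an $\ell_2$-ball around $\bs{H}_0$ and multiplying by its volume), while you factor the same computation via the Cameron--Martin shift into a decentering cost $\tau\,\bs{h}^{*T}\Upsilon\bs{h}^*$ plus a centred small-ball term; this is morally identical but more modular, and your estimates $\bs{h}^{*T}L\bs{h}^*\lesssim p_n\eps_n^4$ and $p_n^{-2}\|\bs{h}^*\|_2^2\lesssim(\log n)^2/p_n$ are correct.

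One step of your LNGL sketch does need to be spelled out: the global $\ell_\infty$-Lipschitz constant of softmax is $\max_j 2S_j(1-S_j)\le 1/2$, which is far too crude to send the ball $\{\|\bs{H}-\bs{h}^*\|_\infty\le c\eps_n\}$ into the $\theta$-box of radius $c\eps_n^{1+4/\rho}$. What you actually need is the local Jacobian bound $\|DS\|_{\infty\to\infty}\le 2\max_j S_j$, together with the observation that for $\|\bs{H}-\bs{h}^*\|_\infty\le c\eps_n$ each $S_j$ stays within a factor $e^{\pm 2c\eps_n}$ of $\theta_j^*\asymp\eps_n^{4/\rho}$, so the relevant Lipschitz constant along the segment is $\asymp\eps_n^{4/\rho}$. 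With that in place your shift-and-small-ball bound goes through. The paper avoids this delicacy entirely by using the position-independent $1$-Lipschitz property of softmax with respect to $\|\cdot\|_2$ and working with $\ell_2$-balls in $\bs{H}$-space.
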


\begin{lem}\label{lem:uniformtest_3} Fix $(t,z)\in \scr{M}$. Define $U_n(t,z):=\{f\in\mathcal{F}\colon |F(t,z)-F_0(t,z)|>C\eta_n\}$.
There exists a sequence of test functions $\Phi_n$ such that
\begin{equation}
\label{tests_3}
\begin{split}
\EE_0(\Phi_n)&= o(1),\\
\sup_{f\in U_n(t,z)}\EE_f(1-\Phi_n)&\le c_1e^{-c_2C^2n\eps_n^2},
\end{split}
\end{equation}
for  positive constants $c_1, c_2$ and constant $C$ appearing in Theorem \ref{thm:CSCMconsistency}.
\end{lem}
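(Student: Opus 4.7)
The plan is to define $\Phi_n$ as the indicator that a smoothed estimator of $F(t, z)$ lies far from $F_0(t, z)$, then control both error types via Bernstein's inequality. Fix a bounded, symmetric, compactly supported probability kernel $K$ on $\R$, write $K_{h_n}(u) := K(u/h_n)/h_n$ for a bandwidth $h_n := c_0 \eta_n$ (with a constant $c_0 > 0$ to be chosen small), and set
\[ \hat T_n := \frac{1}{n}\sum_{i=1}^n \psi(T_i, Z_i), \qquad \psi(s, w) := \frac{K_{h_n}(s - t)}{g(s)}\,\ind_{0 < w \le z}. \]
Independence of $T$ and $(X, Y)$ together with \eqref{eq:density} gives $\EE_f \psi(T, Z) = \int K_{h_n}(u - t)\, F(u, z)\, du$, so that $|\EE_f \psi - F(t, z)| \le h_n M_2 \|f\|_\infty$, using $|\partial_u F(u, z)| = |\int_0^z f(u, y)\,dy| \le M_2 \|f\|_\infty$. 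The test is $\Phi_n := \ind_{|\hat T_n - F_0(t, z)| > C\eta_n/2}$.

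For concentration, each summand $\psi(T_i, Z_i)$ is bounded by $\|K\|_\infty/(h_n \ul K)$ (Assumption \ref{CSCMasmp_g}) and satisfies $\EE_f \psi(T, Z)^2 = O(1/h_n)$, so Bernstein's inequality delivers
\[ \PP_f\!\left(|\hat T_n - \EE_f \psi| > \tau\right) \le 2 \exp\!\left(-c_1 n h_n \tau^2\right) \]
for $\tau = O(1)$. With $\tau = C\eta_n/4$ and $h_n = c_0 \eta_n$, the exponent becomes $c_1 c_0 C^2 n \eta_n^3/16 = c' C^2 n \eps_n^2$, since $\eta_n^3 = \eps_n^2$. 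Under $f_0$ (which has $\|f_0\|_\infty \le \ol M$), the bias is at most $c_0 \ol M M_2 \eta_n \le C\eta_n/4$ if $c_0$ is small, so $|\hat T_n - F_0(t, z)| > C\eta_n/2$ forces $|\hat T_n - \EE_0 \psi| > C\eta_n/4$, yielding $\EE_0 \Phi_n \le 2\exp(-c' C^2 n \eps_n^2) \to 0$.

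For the Type II error, restrict attention to $f$ in the sieve $\{f : \|f\|_\infty \le B\}$ with $B$ a constant fitted to the prior. For such $f$ in $U_n(t, z)$, the triangle inequality gives
\[ |\EE_f \psi - F_0(t, z)| \ge |F(t, z) - F_0(t, z)| - h_n M_2 \|f\|_\infty \ge C\eta_n - c_0 B M_2 \eta_n \ge \tfrac{3}{4} C\eta_n \]
once $c_0 B$ is chosen small, so $|\hat T_n - F_0(t, z)| \le C\eta_n/2$ forces $|\hat T_n - \EE_f \psi| \ge C\eta_n/4$, and Bernstein yields $\EE_f(1 - \Phi_n) \le 2\exp(-c' C^2 n \eps_n^2)$, which is the exponential bound sought.

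The main obstacle is the bias-variance tension: the bias $h_n \|f\|_\infty$ must be $o(\eta_n)$, which only holds when $\|f\|_\infty$ is bounded, while the Bernstein exponent $n h_n \eta_n^2$ requires $h_n$ not too small. These reconcile at $h_n \asymp \eta_n$ precisely because $\eta_n = \eps_n^{2/3}$ matches the Bernstein rate $n \eta_n^3$ to the target $n \eps_n^2$. Extending the supremum from $U_n(t,z) \cap \{\|f\|_\infty \le B\}$ to all of $U_n(t, z)$ is then folded into the broader Ghosal--van der Vaart contraction argument by showing that the prior assigns only exponentially small mass to $\{\|f\|_\infty > B\}$, a property ensured by the piecewise-constant structure of both the Dirichlet and LNGL priors for a suitable constant $B$.
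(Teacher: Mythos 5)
Your construction uses a symmetric kernel $K_{h_n}$ divided by $g$, which creates a bias term of the form $h_n M_2 \|f\|_\infty$ that must be small compared with $\eta_n$. This forces you to restrict to the sieve $\{\|f\|_\infty \le B\}$, and you then assert that the prior assigns only exponentially small mass (in $n\eps_n^2$) to its complement. That assertion is the genuine gap, and it is in fact false for the priors considered. The prior density is $f_\theta = \sum_{j,k} \theta_{j,k}\,\delta_n^{-2}\,\ind_{A_{n,j}\times B_{n,k}}$ with $\delta_n^2 \asymp p_n^{-1}$, so $\|f_\theta\|_\infty \asymp p_n \max_{j,k}\theta_{j,k}$, and $\|f_\theta\|_\infty \le B$ requires $\max_{j,k}\theta_{j,k}\lesssim B/p_n$. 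Under a $\mbox{Dirichlet}(\alpha_1,\ldots,\alpha_{p_n})$ prior with $\alpha_l \le 1$ (e.g.\ the uniform case $\alpha_l=1$), each coordinate is marginally $\mbox{Beta}(1,p_n-1)$ and $\Pr(\theta_l > B/p_n) = (1-B/p_n)^{p_n-1}\approx e^{-B}$, so $\Pr(\max_l\theta_l > B/p_n)$ does not decay at all in $n$ for fixed $B$ — it actually tends to $1$. A similar phenomenon occurs for the softmax of a Gaussian vector. So $\Pi_n(\|f\|_\infty > B)\not\le e^{-cn\eps_n^2}$, and allowing $B=B_n\to\infty$ to fix this breaks your bias bound, since you need $h_n B_n \lesssim \eta_n$ while simultaneously keeping $n h_n \eta_n^2 \gtrsim n\eps_n^2$.

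The paper avoids this entirely by using \emph{one-sided} indicator kernels $\kappa_n^\pm(t,z;T,Z)=\ind_{[t,t\pm h_n]}(T)\ind_{(0,z]}(Z)$ and, crucially, exploiting monotonicity of distribution functions rather than a sup-norm bound on $f$. For $f\in U_{n,1}(t,z)$ and $x\in[t,t+h_n]$ one has $F(x,z)\ge F(t,z)$ automatically, so
\[
F(x,z)-F_0(x,z)\ge F(t,z)-F_0(t,z)-\bigl(F_0(x,z)-F_0(t,z)\bigr)\ge C\eta_n-\ol{M}M_2 h_n,
\]
and only boundedness of $f_0$ (Assumption \ref{CSCMasmp_f0}) is used — no bound on $\|f\|_\infty$ is needed. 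The ``excess mass'' of a potentially very spiky $f$ in $[t,t+h_n]$ then only helps the Type II bound, since it pushes $\int_t^{t+h_n} g\,(F-F_0)$ further above $e_n$. This removes the sieve condition altogether, and the test works over all of $U_n(t,z)$ for interior $t$. Your proposal also silently omits the boundary cases $t\in\{0,M_1\}$, where a symmetric kernel extends outside $[0,M_1]$; the paper handles these separately using the marginal bound \eqref{eq:fbound} implied by the histogram structure. The Bernstein mechanics and the rate bookkeeping ($h_n\asymp\eta_n$, $\eta_n^3=\eps_n^2$) in your argument are fine — the missing idea is the one-sided kernel together with monotonicity.
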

The remainder of the proof of Theorem \ref{thm:CSCMconsistency} is standard and follows the general ideas in  \cit{Ghosal}.

%
%

\section{Proof of Lemmas}\label{sec:lemproof_3}

\subsection{Proof of lemma \ref{lem:KL_3}}
\begin{proof}
The proof consists of 4 steps:
\begin{enumerate}
	\item constructing a subset $\Omega_n$ of $S_n$ for which the prior probability can be bounded from below;
	\item deriving this bound for the D-prior;
	\item deriving this bound for the LNGL-prior;
	\item verifying that the prior mass condition is satisfied for the choice of $\eps_n$ in the lemma. 
\end{enumerate}
In the proof we will use double-indexation of coefficients, so rather than $\theta_j$ we write $\theta_{j,k}$, where $(j,k)$ indexes a particular bin. A similar convention applies to $H$. 

{\bf Step 1.} We first give a sequence of approximations for $f_0$. Let $\delta_n$ be a sequence of positive numbers tending to $0$ as $n\to \infty$. 
For each $n$, let $A_{n,j}$, $B_{n,k}$ be sets such that $\cup_{j=1}^{J_n} \cup_{k=1}^{K_n}(A_{n,j}\times B_{n,k})$ is a  partition of $\mathcal{M}$, where the integers $J_n$ and $K_n$ are  chosen such that $|A_{n,j}|=|B_{n,k}|=\delta_n$ for all $j$ and $k$.

Let $f_{0,n}$ be the piecewise constant density function defined by
\begin{equation}\label{eq:f0n_def}f_{0,n}(t,z)=\sum_{j=1}^{J_n}\sum_{k=1}^{K_n} \frac{w_{0,j,k}}{|A_{n,j}\times B_{n,k}|}\ind_{A_{n,j}\times B_{n,k}}(t,z),\end{equation}
where $w_{0,j,k}=\int_{A_{n,j}}\int_{B_{n,k}}f_0(u,v)\dd v\dd u$. That is, we approximate $f_0$ by averaging it on each bin. 
Set $\eps_n^2 = \delta_n^\rho$. By Lemma \ref{lem:subset_omega} in the appendix there exists a constant $C>0$ such that the set defined by 
\begin{equation}
\label{eq:omega}
\bar\Omega_n:=\left\{f\in\mathcal{F}: ||f-f_{0,n}||_\infty\le C \delta_n^{\rho},\, \mbox{supp}(f)\supseteq\mathcal{M}\right\}.
\end{equation}
satisfies $\bar\Omega_n\subseteq S_n$.  

Recall that $p_n=J_nK_n$ denote the total number of bins. According to the prior specifications in Section  \ref{sec:CSCMprior}, for any $f\in\scr{F}$, we parameterize
\[ f_{\bs{\th}}(x,y) = \sum_{j,k} \frac{\th_{j,k}}{|A_{n,j} \times B_{n,k}|} \ind_{A_{n,j} \times B_{n,k}}(x,y),\qquad (x,y)\in \RR^2, \]
where $\bs{\th}$ denotes the vector obtained by stacking all coefficients $\{\th_{j,k},j=1,\dots,J_n,k=1,\dots,K_n\}$. 
For any $(t,z)\in A_{n,j}\times B_{n,k}$, $j,k\ge 1$, we have
\[|f_{\bs{\th}}(t,z)-f_{0,n}(t,z)|=|A_{n,j} \times B_{n,k}|^{-1}|\th_{j,k}-w_{0,j,k}|\le \delta_n^{-2}\max_{j,k}|\th_{j,k}-w_{0,j,k}|.\]
 Hence
\begin{equation}\label{eq:subsettheta} \Omega_n:=\left\{f_{\bs{\th}}\in\mathcal{F}\,\colon \,\max_{j,k}\, |\th_{j,k}-w_{0,j,k}|\le C\delta_n^{\rho+2}\right\}\subseteq\bar\Omega_n\end{equation}
and consequently $\Pi_n(S_n) \ge \Pi_n(\Omega_n)$. 

{\bf Step 2.}
For the D-prior we have $\bs{\theta}\sim\mbox{Dirichlet}(\alpha)$ for fixed $\alpha=(\alpha_1,\dots,\alpha_{p_n})$, where we assume that  $ap_n^{-1}\le\alpha_l\le 1$ for all $l=1,\dots,p_n$.
By Lemma 6.1 in \cit{Ghosal}, we have
\begin{align*}
\Pi_n(\Omega_n)
&\ge \Gamma\Bigg(\sum_{l=1}^{p_n}\alpha_l\Bigg)\left(C\delta_n^{\rho+2}\right)^{p_n}\prod_{l=1}^{p_n}\alpha_l\\
&\ge \exp\left(\log \Gamma(a)+p_n\log\left(C\delta_n^{\rho+2}\right)+p_n\log (ap_n^{-1})\right)
\end{align*}
As $p_n \asymp \delta_n^{-2}$, $p_n \log \delta_n^{\rho+2} \asymp p_n \log p_n^{-(\rho+2)/2}= -(\rho/2+1) p_n \log p_n$. We conclude that the exponent behaves asymptotically as a multiple of $-p_n \log p_n$. 

{\bf Step 3.} Let $\th_{j,k} = \frac{\psi(H_{j,k})}{\sum_{j,k} \psi(H_{j,k})}$ as defined in (\ref{eq:th_psiform}).
For the LNGL-prior, as will become clear shortly, we need a lower bound on $|\Upsilon|$ and upper bound on the largest eigenvalue of $\Upsilon$. For the latter,  note that  all eigenvalues of any stochastic matrix are bounded by $1$. For the graph considered here, there exist diagonal matrices $U$ and $D$ such that $D(U + L)$ is a stochastic matrix (simply adding elements to the diagonal to have a matrix with nonnegative elements, followed by renormalising each row to have its elements sum to 
one. This procedure implies that in the present setting the largest eigenvalue of $L$ is bounded by $8$. Therefore, the largest eigenvalue of $\Upsilon$ is bounded by $9$. As $L$ has smallest eigenvalue $0$,  the smallest eigenvalue of $\Upsilon$ equals $p_n^{-2}$. Hence \[|\Upsilon| \ge (p_n^{-2})^{p_n} = \exp\left(-2p_n \log p_n\right). \]

We have the following bounds:
\begin{align*}
\delta_n^2 \| f- f_{0,n}\|_\infty &\le  \sup_{j,k}\, |\theta_{j,k}- \theta_{0,j,k}| = \| S(\bs{H}) - S(\bs{H}_0)\|_\infty \\ & \le
 \| S(\bs{H}) - S(\bs{H}_0)\|_2 \le \|\bs{H}-\bs{H}_0\|_2, 
\end{align*}
assuming $S$ is the softmax function. At the last inequality we use that this function is Lipschitz-continuous with respect to $\|\cdot\|_2$ (Cf.\  Proposition 4 in \cit{GaoPavel}). 
Therefore 
\[ \underline{\Omega}_n := \{ \bs{H}\,:\, \|\bs{H}-\bs{H}_0\|_2 \le C \delta_n^{2} \Delta_n\} \subseteq \{f\,:\, \|f-f_{0,n}\|_\infty \le C\Delta_n\},. \]
where $\Delta_n := \delta_n^\rho$. Whereas  notationally implicit, both sets in the preceding display are sets of $\omega$s induced by the constraints on either $\bs{H}$ or $f$. 
Hence it suffices to lower bound 
\[ \int  \int_{\underline{\Omega}_n} \phi(\bs{H}; 0, \tau \Upsilon^{-1}) \dd \bs{H} f(\tau) \dd \tau .
\]

We first focus on the inner integral. As the largest eigenvalue of $\Upsilon$ s bounded by $9$, we have $\bs{H}^T \Upsilon \bs{H} \le 9 \|\bs{H}\|_2^2$.
Hence, 
\begin{align*}
\scr{I}_n(\tau)&:= \int_{\underline{\Omega}_n} \phi(\bs{H}; 0, \tau \Upsilon^{-1}) \dd \bs{H} \\& = (2\pi \tau)^{-p_n/2} |\Upsilon|^{1/2}	 \int_{\underline{\Omega}_n} \exp\left(-\frac1{2\tau} \bs{H}^T \Upsilon \bs{H} \right) \dd \bs{H} \\ & \ge  (2\pi \tau)^{-p_n/2}\exp\left( -p_n \log p_n\right) \int_{\underline{\Omega}_n}\exp\left(-\frac1{2\tau} 9 \|\bs{H}\|^2 \right) \dd \bs{H}
 \end{align*}
 If $\bs{H} \in \underline{\Omega}_n$, then 
 \[ \|\bs{H}\|^2 \le \|\bs{H}-\bs{H}_0\|^2 + \|\bs{H}_0\|^2 \le C^2\delta_n^{4} \Delta_n^2 +  \|\bs{H}_0\|^2. \]
which means that 
  \begin{align*}
\scr{I}_n(\tau)&\ge (2\pi \tau)^{-p_n/2}\exp\left( -p_n \log p_n\right) \int_{\underline{\Omega}_n}\exp\left(-\frac1{2\tau} 9\left(C^2\delta_n^{4}\Delta_n^2+\|\bs{H}_0\|^2\right) \right) \dd \bs{H}\\ &= (2\pi \tau)^{-p_n/2}\exp\left( -p_n \log p_n\right) \exp\left(-\frac1{2\tau} 9 \left(C^2\delta_n^{4}\Delta_n^2+\|\bs{H}_0\|^2\right) \right) \mbox{Vol}(\underline\Omega_n).
\end{align*}
Hence, 
\begin{equation}\label{eq:I_n(tau)} \log \scr{I}_n(\tau) \ge -\frac{p_n}{2} \log (2\pi \tau) - p_n \log p_n - \frac1{2\tau} 9\left(C^2\delta_n^{4}\Delta_n^2+\|\bs{H}_0\|^2 \right) + \log \mbox{Vol}(\bar\Omega_n). \end{equation}
We express the asymptotic behaviour of all terms in the exponent in terms of $p_n$. To this end, note that $p_n \asymp \delta_n^{-2}$, hence $\Delta_n \asymp p_n^{-\rho/2}$, leading to $ \delta_n^4 \Delta_n^4 \asymp p_n^{-2} p_n^{-\rho} = p_n^{-\rho-2} $. As $\mbox{Vol}(\bar\Omega_n) \asymp \Delta_n^{p_n} $ we have $\log \mbox{Vol}(\bar\Omega_n) \asymp p_n \log \Delta_n \asymp -\frac{\rho}{2} p_n \log p_n$. 
 So we can conclude that the right-hand-side of \eqref{eq:I_n(tau)} behaves as $-p_n \log p_n$ for $n$ large.

 {\bf Step 4.} For both priors, the prior mass condition gives the following condition on $\eps_n$:
 \[ p_n \log p_n \lesssim n \eps_n^2. \]
As $\delta_n^\rho = \eps_n^2$ we get $p_n \asymp  \delta_n^{-2} = \left(\eps_n^{2/\rho}\right)^{-2}=\eps_n^{-4/\rho}$.
Hence we need to choose $\eps_n$ such that 
\[  \eps_n^{-4/\rho} \log (1/\eps_n) \lesssim n\eps_n^2. \]
This relationship is satisfied if  
\[ \eps_n \asymp  (n/\log n)^{-\frac{\rho}{4+2\rho}}. \]

\end{proof}


\subsection{Proof of lemma \ref{lem:uniformtest_3}}
\begin{proof}
We consider different test functions in different regimes of $t$: $t\in (0,M_1)$ and $t\in\{0,M_1\}$.

First consider $(t,z)\in (0,M_1)\times(0,M_2]$. Define test sequences
\begin{align*}
\Phi_n^+(t,z)=\ind\left\{\frac1{n}\sum_{i=1}^n \kappa_n^+(t,z;T_i,Z_i)-\int_t^{t+h_n}g(x)F_0(x,z)\dd x>e_n/2\right\},\\
\Phi_n^-(t,z)=\ind\left\{\frac1{n}\sum_{i=1}^n \kappa_n^-(t,z;T_i,Z_i)-\int_{t-h_n}^tg(x)F_0(x,z)\dd x<-e_n/2\right\},
\end{align*}
where \begin{align*}
\kappa_n^+(t,z;T,Z)&=\ind_{[t,t+h_n]}(T)\ind_{(0,z]}(Z),\\
\kappa_n^-(t,z;T,Z)&=\ind_{[t-h_n,t]}(T)\ind_{(0,z]}(Z).\end{align*}
Note that if $(T,Z) \sim f$, then 
\begin{align*}
\EE_f(\kappa_n^+(t,z;T,Z))&=\int \ind_{[t,t+h_n]}(x)\ind_{(0,z]}(u) s_f(x,u)\dd \mu(x,u)\\
&= \int_t^{t+h_n}\int_0^z g(x)\partial_2F(x,u)\dd \mu_2(x,u)
=\int_t^{t+h_n}g(x)F(x,z)\dd x,
\end{align*}
where $s_f$ is the density function of $(T,Z)$ defined in (\ref{eq:density}). 
By  Assumption \ref{CSCMasmp_g},
\[  \EE_f \left[(\kappa_n^+(t,z;T,Z))^2\right] = \EE_f(\kappa_n^+(t,z;T,Z))   
\le \int_t^{t+h_n}g(x)\dd x\le \ol{K}h_n.  \] 

 The same upper bound holds for $\EE_f \left[(\kappa_n^-(t,z;T,Z))^2\right]$.
By Bernstein's inequality (\cit{AWVaart}, Lemma 19.32),
\[\EE_0(\max(\Phi_n^+(t,z),\Phi_n^-(t,z)))\le 2\exp\left(-\frac1{16}\frac{ne_n^2}{\ol{K}h_n+e_n/2}\right)=o(1).\]

%

Let $\{\eta_n\}$ be a sequence of positive numbers tending to zero as $n\to \infty$. 
For $(t,z)\in[0,M_1]\times(0,M_2]$, define sets
\begin{align*}
U_{n,1}(t,z)&=\{f: F(t,z)>F_0(t,z)+C\eta_n\},\\
U_{n,2}(t,z)&=\{f: F(t,z)<F_0(t,z)-C\eta_n\}.\end{align*}
Then $U_n(t,z)=U_{n,1}(t,z)\cup U_{n,2}(t,z)$.

When $f\in U_{n,1}(t,z)$, for any $x\in[t,t+h_n]$, by the monotonicity of $F$ and $f_0\le \ol{M}$, we have
\begin{align*}F(x,z)-F_0(x,z)&\ge F(t,z)-F_0(t,z)-(F_0(x,z)-F_0(t,z))\\
&\ge C\eta_n-\ol{M}M_2h_n\ge C\eta_n/2.
\end{align*}
Then it follows
\[\int_t^{t+h_n}g(x)(F(x,z)-F_0(x,z))\dd x\ge \frac{C\eta_n}{2}\int_t^{t+h_n}g(x)\dd x\ge \frac{C\ul{K}}{2}\eta_n h_n=e_n.\]
Hence, for $f\in U_{n,1}$ we have
\begin{align*}\EE_f(1-\Phi_n^+(t,z))&=\PP_f\left(\frac1{n}\sum_{i=1}^n \kappa_n^+(t,z|T_i,Z_i)-\int_t^{t+h_n}g(x)F_0(x,z)\dd x<e_n/2\right)\\
&\le \PP_f\left(\frac1{n}\sum_{i=1}^n \kappa_n^+(t,z|T_i,Z_i)-\int_t^{t+h_n}g(x)F(x,z)\dd x\le -e_n/2\right).
\end{align*}
Further, Bernstein's inequality gives
\[\EE_f(1-\Phi_n^+(t,z))\le 2\exp\left(-\frac1{16}\frac{n e_n^2}{\ol{K}h_n+e_n/2}\right)\le c_1 e^{-c_2C^2n\eps_n^2}\]
for some constants $c_1, c_2>0$.

When $f\in U_{n,2}(t,z)$, $x\in[t-h_n,t]$, we have
\begin{align*}F(x,z)-F_0(x,z)&\le F(t,z)-F_0(t,z)+F_0(t,z)-F_0(x,z)\\
&\le -C\eta_n+\ol{M}M_2h_n\le -C\eta_n/2
\end{align*}
and
\[\int_{t-h_n}^tg(x)(F(x,z)-F_0(x,z))\dd x\le -\frac{C\ul{K}}{2}\eta_n h_n=-e_n.\]
Hence for $f\in U_{n,2}$, the  type II error satisfies
\[\EE_f(1-\Phi_n^-(t,z))
\le \PP_f\left(\frac1{n}\sum_{i=1}^n \kappa_n^-(t,z|T_i,Z_i)-\int_{t-h_n}^tg(x)F(x,z)\dd x\ge e_n/2\right).
\]
Using Bernstein's inequality again, we have
\[\EE_f(1-\Phi_n^-(t,z))\le c_1 e^{-c_2C^2n\eps_n^2},\quad \mbox{for some}\,\,c_1,c_2>0.\]

\medskip
For the boundary case $(t,z)\in \{0,M_1\}\times(0,M_2]$. With a similar idea, in order to give non-zero test sequences, we use $\kappa_n^+$ define $\Phi^+_n(0,z), \Phi^-_n(0,z)$ and $\kappa_n^-$ define $\Phi^+_n(M_1,z), \Phi^-_n(M_1,z)$.
When $f\in U_{n,1}(0,z)$, using the tests sequence $\Phi_n^+(0,z)$ defined in case $t\in(0,M_1)$, we have
\[
\sup_{f\in U_{n,1}(0,z)}\EE_f(1-\Phi_n^+(0,z))\le c_1 e^{-c_2C^2n\eps_n^2}.\]
When $f\in U_{n,2}(M_1,z)$, using the tests sequence $\Phi_n^-(M_1,z)$ defined in case $t\in(0,M_1)$, we have
\[\sup_{f\in U_{n,2}(M_1,z)}\EE_f(1-\Phi_n^-(M_1,z))\le c_1 e^{-c_2C^2n\eps_n^2}.\]

Note that for any $f\sim\Pi_n$ and $t\in A_{n,j}$, $j=1,\dots,J_n$,
\begin{equation}\label{eq:fbound}
\int_0^{M_2}f(t,v)\dd v=|A_{n,j}|^{-1}\sum_{k=1}^{K_n}\th_{j,k}\le \delta_n^{-1}K_n=M_2.\end{equation}
Here we use $\th_{j,k}\le 1$ and $|A_{n,j}|\ge \delta_n$.
When $f\in U_{n,2}(0,z)$, for any $x\in[0,h_n]$, using (\ref{eq:fbound}) we have
\begin{align*}
F(x,z)-F_0(x,z)&\le F(x,z)-F(0,z)+F(0,z)-F_0(0,z)\\
&\le \int_0^x\int_0^zf(u,v)\dd v\dd u-C\eta_n\\
&\le M_2h_n-C\eta_n\le-C\eta_n/2
\end{align*}
and
\[\int_0^{h_n}g(x)(F(x,z)-F_0(x,z))\dd x\le -e_n.\]
Define 
tests sequence
\[\Phi_n^-(0,z)=\ind\left\{\frac1{n}\sum_{i=1}^n \kappa_n^+(0,z|T_i,Z_i)-\int_0^{h_n}g(x)F_0(x,z)\dd x<-e_n/2\right\}.\]
Hence by the Bernstein's inequality,
\[
\EE_f(1-\Phi_n^-(0,z))\le c_1e^{-c_2C^2n\eps_n^2}.\]

By the similar arguments as above, when $f\in U_{n,1}(M_1,z)$, for any $x\in[M_1-h_n,M_1]$, using (\ref{eq:fbound}) we have
\begin{align*}
F(x,z)-F_0(x,z)&\ge F(x,z)-F(M_1,z)+F(M_1,z)-F_0(M_1,z)\\
&\ge C\eta_n-\int_{M_1-h_n}^{M_1}\int_0^zf(u,v)\dd v\dd u\\
&\ge C\eta_n-M_2h_n\ge C\eta_n/2
\end{align*}
and
\[\int_0^{h_n}g(x)(F(x,z)-F_0(x,z))\dd x\ge -e_n.\]
Define tests sequence
\[\Phi_n^+(M_1,z)=\ind\left\{\frac1{n}\sum_{i=1}^n \kappa_n^-(M_1,z|T_i,Z_i)-\int_{M_1-h_n}^{M_1}g(x)F_0(x,z)\dd x>e_n/2\right\},\]
hence,
\[
\EE_f(1-\Phi_n^+(M_1,z))\le c_1e^{-c_2C^2n\eps_n^2}.\]

To conclude, take $\Phi_n(t,z)=\max(\Phi_n^+(t,z),\Phi_n^-(t,z))$, we derived
\begin{align*}
\EE_0\Phi_n(t,z)&=o(1),\\
\sup_{f\in U_n(t,z)}\EE_f(1-\Phi_n(t,z))&\le c_1 e^{-c_2C^2n\eps^2_n}.\end{align*}
\end{proof}

\section{Computational methods}\label{sec:CSCMcomput}
In this section we present algorithms for drawing from the posterior distribution for both priors described in  section \ref{sec:CSCMprior}.  Contrary to our theoretical contribution, we include a
 prior on the scaling parameter $\tau$  appearing in the LNGL prior. 
 Likewise, for the D-prior we will assume
 \begin{align*}
	\tau & \sim \Pi\\
	\th \mid \tau &\sim \mbox{Dirichlet}(\tau,\ldots, \tau). 
\end{align*}
 to ensure a fair comparison of simulation results obtained by either of these priors.

\subsection{Dirichlet prior (D-prior)}\label{subsec:dirichlet}
First, we consider the case where
$\{(X_i,Y_i),\, i=1,\ldots,n\}$ is a sequence of independent random vectors, with common density $f_0$ that is piecewise constant on $A_{n,j}\times B_{n,k}$ and compactly supported. This ``no-censoring'' model has likelihood
\[ l(\bs{\theta}) = \prod_{j,k} \th_{j,k}^{C_{j,k}}, \]
where $C_{j,k}=\sum_i \ind\{(X_i, Y_i) \in A_{n,j} \times B_{n,k}\}$ denotes  the number of observations that fall in bin $A_{n,j} \times B_{n,k}$. Clearly, the Dirichlet prior is conjugate for the likelihood,  resulting in the  posterior being of Dirichlet type as well and known in closed form.
In case of censoring, draws from the posterior for the Dirichlet prior can be obtained by data-augmentation, where the following two steps are alternated:
\begin{enumerate}
  \item Given $\bs{\theta}$ and censored data, simulate the ``full data''. This is tractable since the censoring scheme tells us in which collection of  bins the actual observation can be located. Then one can renormalise the density $f$ restricted to these bins and select a specific bin accordingly and generate the ``full data''. Cf.\ Figure \ref{fig:censoring-sketch} for the two types of observations.
  \item Given the ``full data'', draw samples for $\bs{\theta}$ from the posterior which is of Dirichlet type.
\end{enumerate}
\begin{figure}
\begin{center}
	\includegraphics[scale=0.4]{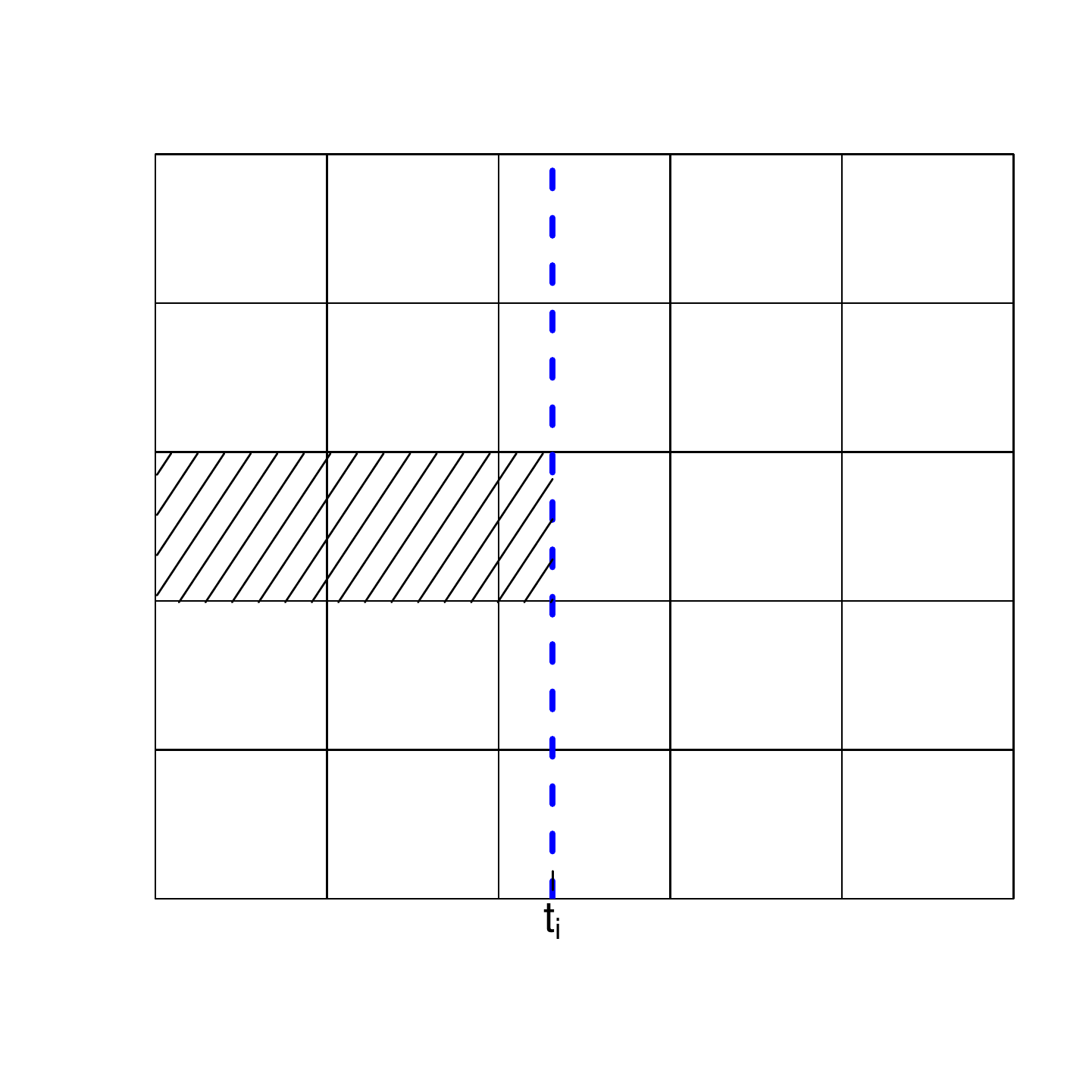}
    \includegraphics[scale=0.4]{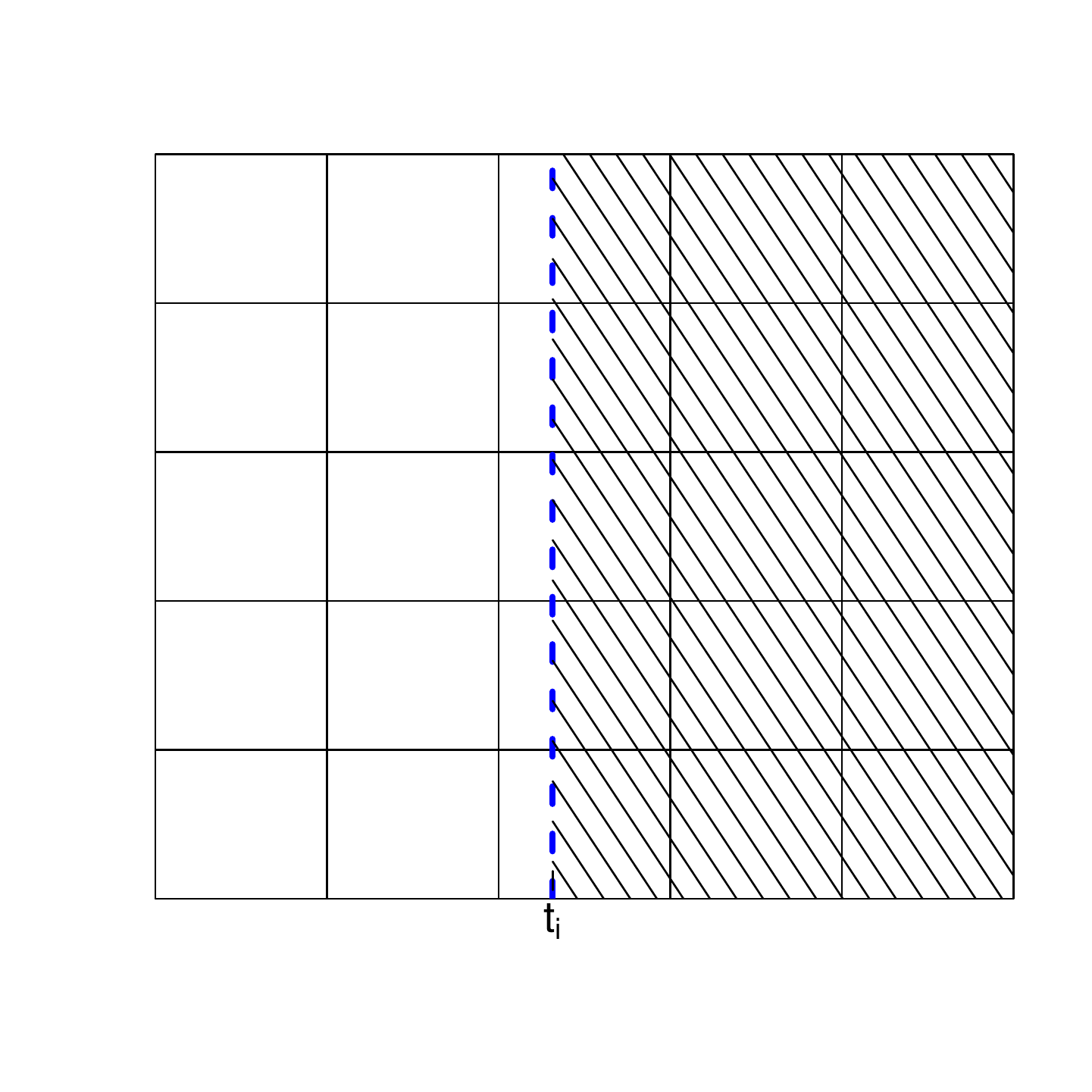}
	\caption{Left: if $x_i \le t_i$ the mark is observed. Right: if $x_i> t_i$ the mark is not observed. \label{fig:censoring-sketch}}
\end{center}
\end{figure}

\subsection{Logistic Normal Graph Laplacian prior (LNGL-prior)}

For the LNGL prior, one could opt for a data-augmentation scheme as well, but its attractiveness  is lost since step (2) above is no longer of  simple form. Therefore, we propose to bypass data-augmentation in this case. In an initial version of this paper we have proposed to use the probabilistic programming language {\sffamily Turing} (see \cit{GeXuGha}) that is based on the {\sffamily Julia} language 
(see \cit{Bezanson}). With modest programming efforts, samples from the posterior can then be obtained by Hamiltonian Monte Carlo methods. For completeness, we present the core of such code in Appendix \ref{sec:turing}. As can be seen, this contains about 10 lines of code and reads intuitively. However, a much faster algorithm can  be derived by exploiting structure in the statistical model. The main idea is to use a non-centred parameterisation (\cit{Papas}) combined with a preconditioned Crank-Nicolson (pCN) scheme. The latter scheme dates back to \cit{Neal98}; a more recent exposition can be found in \cit{Cotter} (see also \cit{vdMSchauer} in a different application setting). 

Let $\theta$ denote the parameter vector (with all $\theta_{j,k}$ stacked). Let $S: \RR^p \to \RR^p$ denote the ``softmax''-function, defined by $S(x_1,\ldots, x_p) = (e^{x_1}, \ldots, x^{x_p})/\sum_{i=1}^p e^{x_i}$ (this is a convenient choice, for our algorithm other mappings of  a vector to a probability vector are also allowed). To sample from the prior of  $(\tau, \theta)$, with $\theta\in\RR^p$, we sample according to the following scheme
\begin{align*}
		\tau & \sim \Pi \\
		 z &\sim  \mbox{N}_p(0,I_p) \\
	\theta \mid z, \tau &= S(\bar{U} z\sqrt{\tau}),	 
\end{align*}
where  $\bar U = U^{-1}$ with $U$  obtained from the Cholesky-decompostion of the graph-Laplacian matrix $\Upsilon$ (with a small multiple of the identity matrix added):  $\Upsilon  = U^T U$.  Hence, we have introduced the random vector $z$, which is centred in between $\tau$ and $\theta$. The algorithm we propose is a Gibbs sampler which iteratively updates $\tau$ and $z$. 
While our theoretical contribution assumes a prior on $\tau$ of Gamma type (see Assumption \ref{CSCMasmp_prior}), the MCMC-algorithms in this section apply more generally to a prior distribution $\Pi$ on $\tau$ that is supported on the positive halfline. Small values of $\tau$ induce more smoothing.

For updating $z$ conditional on $\tau$ we use the pCN scheme: pick a tuning parameters $\rho\in[0,1)$ (typically chosen close to $1$) and propose a new  value $z^\circ$ for $z$ by setting
\begin{equation}\label{eq:zcirc} z^\circ = \rho z + \sqrt{1-\rho^2} w, \end{equation}
where $w\sim \mbox{N}_p(0,I_p)$, independently of $(\tau, z)$. Next, the Metropolis-Hastings (MH) acceptance rule is used to accept the proposal  $z^\circ$ with probability $1 \wedge \scr{L}(z^\circ, \tau)/ \scr{L}(z, \tau)$, where $\scr{L}(z,\tau)$ denotes the likelihood, evaluated in $(z,\tau)$ (note that the prior ratio and proposal ratio cancel, as $\pi(z) q(z^\circ \mid z)$ is symmetric in $(z,z^\circ)$).  The likelihood can be simply and efficiently computed. To see this, consider Figure \ref{fig:censoring-sketch}. Observations can be represented either by the left- or right figure. For the $i$-th observation, we will denote  by $a_i$ the vector that contains for each cell of the partition the fraction of the area that is shaded (so most values will be either $0$ or $1$, only cells intersected by the blue dashed line will have entries in $(0,1)$). 
 With this notation, it is easy to see that 
\[ \log \scr{L}(z, \tau) = \sum_{i=1}^n \log (\theta^T  a_i), \quad \text{with} \quad  \theta =  S(\bar{U} z\sqrt{\tau}).\]
To efficiently compute the loglikelihood, all that needs to be computed (once), is for each observation   the vector of indices corresponding to the shaded boxes and the corresponding area fractions (this enables to exploit sparsity in $a_i$ when computing $\theta^T a_i$). Also note that the Cholesky decomposition of $L$ only needs to be computed once.

For updating $\tau$, conditional on the data and $z$, we use a MH-step.
We draw a proposal $\tau^\circ$ according to  $\log \tau^\circ \mid \tau  \sim \mbox{N}(\log \tau, \delta^2)$. It is accepted with probability 
\[ 1 \wedge \frac{\scr{L}(z, \tau^\circ)}{\scr{L}(z, \tau)} \frac{\pi(\tau^\circ)}{\pi(\tau)} \frac{\tau^\circ}{\tau}, \]
where $\pi$ denotes the prior density on $\tau$ and the term $\tau^\circ/\tau$ comes from the Jacobian. 
Note that a partial conjugacy for updating $\tau$ gets lost, even when employing a prior of inverseGamma type. 

We conclude that  in one iteration of the Gibbs-sampler, it is only required to do a simple MH-step for updating $\tau$,   sample $z^\circ$ as in \eqref{eq:zcirc} and use the MH-acceptance rule for this step as well. 


\begin{rem}
In the prior specification we have postulated $\theta \mid z, \tau = S(\bar{U} z\sqrt{\tau})$. Note that the prior on $z$ centers at zero. However, due to translation invariance of the softmax function, the same prior on $\theta$ is specified is we take 	
$\theta \mid z, \tau = S(\bar{U} z\sqrt{\tau} + v)$, where $v$ is a vector that is a multiple of the vector with all elements equal to $1$. Despite this identifiability issue, we have not encountered severe autocorrelation in traceplots for the LNGL-prior. 
\end{rem}

\begin{rem}
The same algorithm applies to more general censoring schemes. For example, suppose there are two checkup times, and the mark is only observed if the event took place in between those checkup times. This setting just corresponds to a different type of shading of boxes in Figure \ref{fig:censoring-sketch} but otherwise does not implicate any change to the numerical setting. Hence, the interval-censored continuous-mark model (see for instance \cit{MW2008}) is covered by our computational methods. 
\end{rem}

\subsection{Numerical examples}

In the following simulations, we compare the priors based on the Dirichlet distribution and the LNGL-prior.
Updating $\tau$ can be done by incorporating  a MH-step similar as for the LNGL-prior.

In each of the reported results, $20,000$ MCMC iterations were used, posterior means were computing after discarding the initial $1/3$ of the iterations as burnin. The algorithms were tuned such that the MH-steps have acceptance probability of about $0.25-0.5$. For analysing a data set of size $n=200$ with $100$ bins,  computing times are about $3$ seconds for $20,000$ iterations, using  a Macbook-pro 2 GHz Quad-Core Intel Core i5 with 16 GB RAM. Note that the complexity of the algorithm scales with the number of bins and not with the sample size. From experiments with a large number of bins it appears that the acceptance rate for the pCN-step does not deteriorate, as also seen in other settings where pCN is utilised (Cf.\ \cite{Cotter}). Julia code (\cit{Bezanson}) is available from \url{https://github.com/fmeulen/CurrentStatusContinuousMarks}. Computed Wasserstein distances are computed using the {\sffamily transport} library in {\sffamily R}.

Traceplots (not included here) confirm that in our experiments the chain on $(\theta, \tau)$ mixes well. Only in case of the D-prior, somewhat larger autocorrelation is observed in the chain for $\tau$.  The prior measure $\Pi$ is taken to be the standard Exponential distribution for both priors. 

We will consider the following data generated as independent replications of $(X,Y)$, where $(X,Y)$ is drawn from the mixture experiment
  \[ (X,Y) \sim \begin{cases} (U,V) & \text{with probability $0.3$}\\
  (1-U,V) & \text{with probability $0.7$} 			
 			\end{cases},
 \]	
with $(U,V)$ having density
\[	f(u,v) = (3/8)(u^2+v)\ind_{[0,1]\times [0,2]}(u,v). \]
In this case, it is easy to sample $(X,Y)$ by first sampling from the marginal distribution of $X$ and then from the density of $Y\mid X$. In both cases the inverse of the cumulative distribution function can be computed in closed form. 
We assume that the censoring random vairable $T \sim \sqrt{U}$ where $U$ is uniformly distributed on $[0,1]$. This implies that the density of $T$ is given by $t\mapsto 2t\ind_{[0,1]}(t)$. For the LNGL-prior we take
$\Upsilon = L + N^{-2}I$ where $L$ is defined in (\ref{eq:laplacian}) and $N$ is the number of cells in the partition used to cover the support of the density. 


\subsubsection{Results for one dataset}
We sample $200$ observations from the model. The true density with observations superimposed is depicted in Figure \ref{fig:obsB}. Horizontally/vertically we took $25/50$ bins, yielding equally sized bins. Note that the number of bins is way larger than the sample size; smoothing/penalisation being enforced by the prior. 
\begin{figure}
\begin{center}
	\includegraphics[scale=0.8]{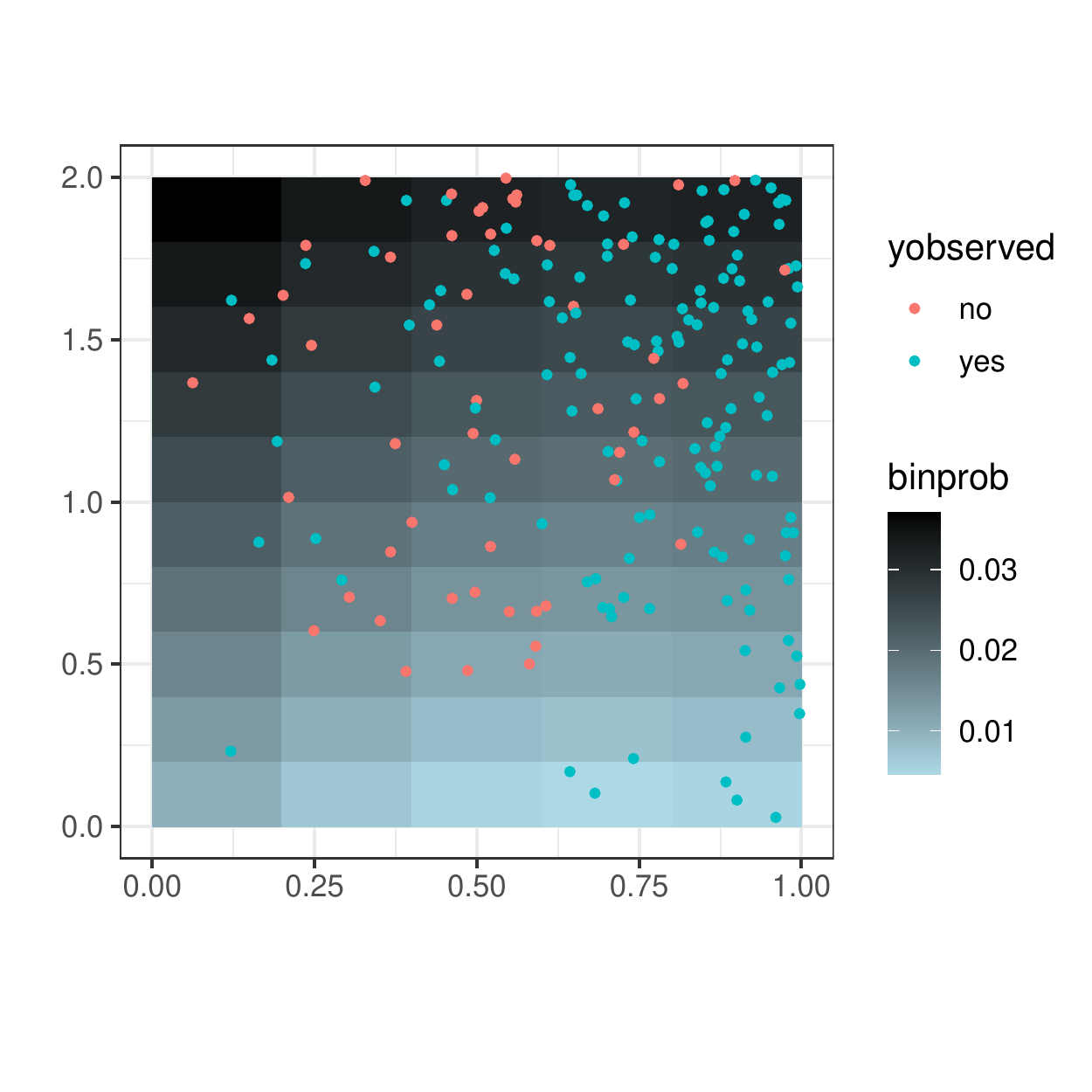}
\end{center}
\caption{For each cell in the partition of $[0,1]\times [0,2]$, the probability of the cell is coloured. Superimposed are points representing the data, where the coordinate along the horizontal axis is the censoring time, and the coordinate along the vertical axis is the mark-variable $y$. If for a particular point $y$ is observed/unobserved (cyan/red colour), then this means the event time is to the left/right of the censoring time $t$. The height of the red points is latent in the observations. \label{fig:obsB}}
\end{figure}
In Figure  \ref{fig:resB_postmean} we compare the performance of the posterior mean estimator under both prior specifications. 
\begin{figure}
	\begin{center}
		\includegraphics[scale=0.8]{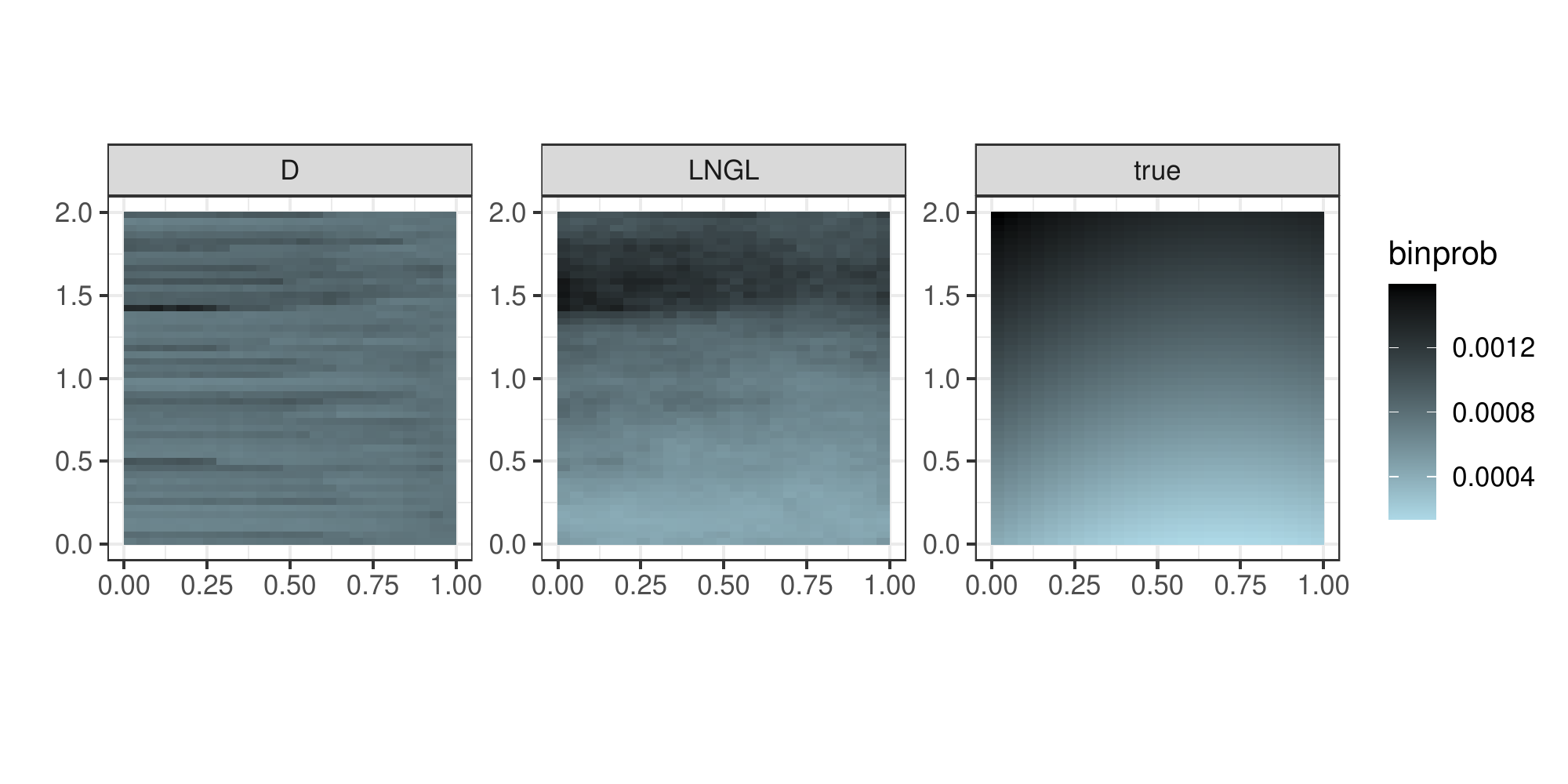}
	\end{center}
	\caption{Left and middle: posterior mean probabilities for D- and LNGL-priors respectively. Right: true posterior probabilities.  Horizontally $25$ bins, vertically $50$ bins. \label{fig:resB_postmean}}
\end{figure}
Smoothing induced by the LNGL-prior is quite apparent and the posterior mean estimate is visually more appealing. The latter is not surprising as the data-generating density is smooth.  

\subsubsection{Varying the number of bins}
In Figures \ref{fig:resB_postmean_coarser} and \ref{fig:resB_postmean_refined} we ran the algorithm on the same dataset using coarser/refined binning. 
\begin{figure}
	\begin{center}
		\includegraphics[scale=0.8]{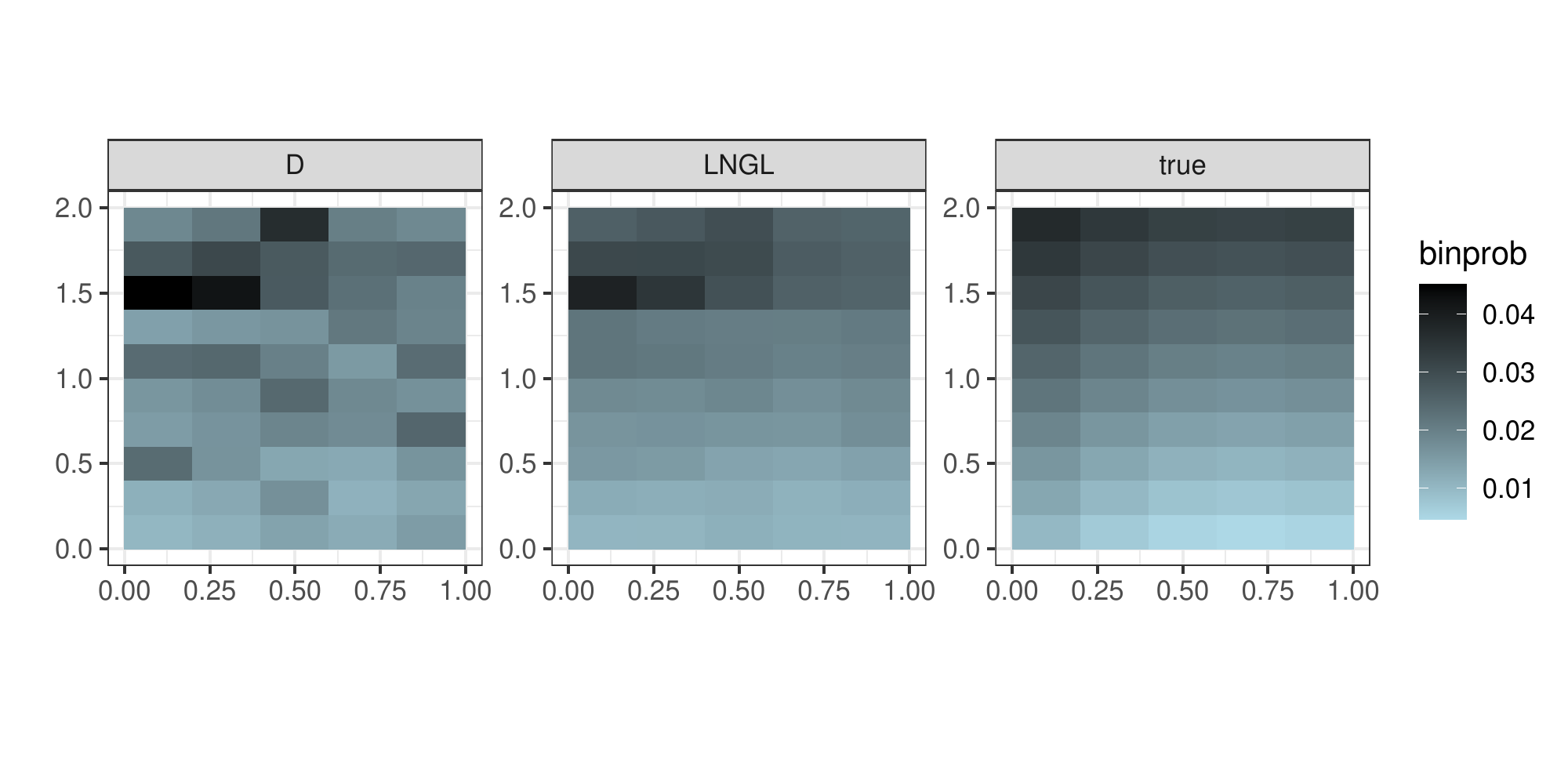}
	\end{center}
	\caption{Left and middle: posterior mean probabilities for D- and LNGL-priors respectively. Right: true posterior probabilities.  Horizontally $5$ bins, vertically $10$ bins. \label{fig:resB_postmean_coarser}}
\end{figure}

\begin{figure}
	\begin{center}
		\includegraphics[scale=0.8]{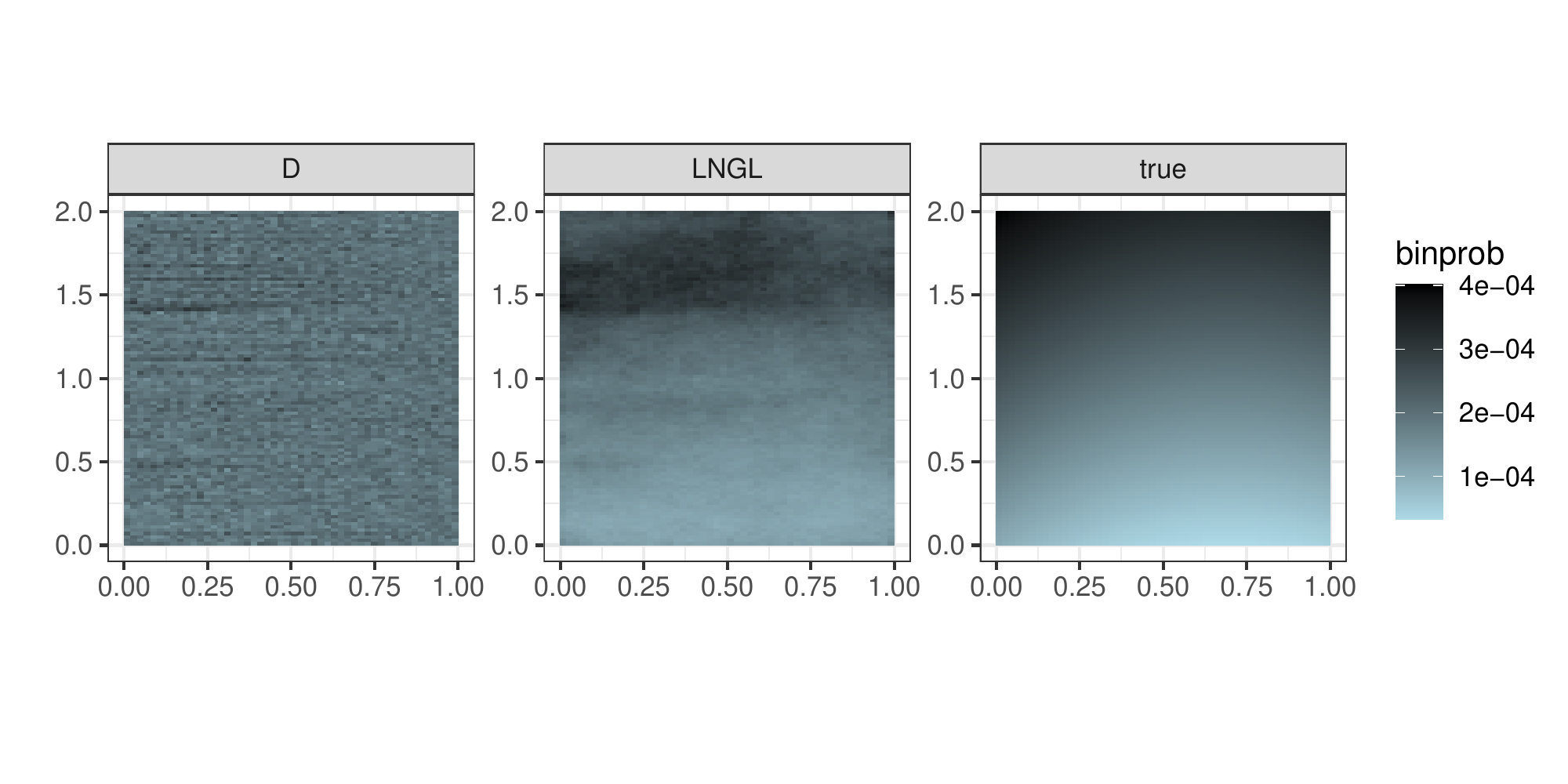}
	\end{center}
	\caption{Left and middle: posterior mean probabilities for D- and LNGL-priors respectively. Right: true posterior probabilities.  Horizontally $100$ bins, vertically $200$ bins. \label{fig:resB_postmean_refined}}
\end{figure}

For each combination of prior/binning, we compute the Wasserstein distance (based on the $\ell_1$/cityblock distance) between the probability vector of bin-masses of the estimate and the probability vector of bin masses of the true data-generating distribution.  For a  definition and motivation for using this distance we refer to  Chapter 2.1 in \cit{PanaretosZemel}. 
\begin{center}
\begin{tabular}{c || c| c| c}
prior/bins & $5/10$ & $25/50$ & $50/100$ \\ \hline\hline
D &   $0.118$   &    $0.224$       &   $0.239$         \\
LNGL &     $0.069$ &    $0.076$       &   $0.082$         \\ 
\end{tabular}
\end{center}
Here ``5/10'' for example means: 5 bins in the horizontal direction 10 bins in the vertical direction. 
Especially with a large number of bins, the performance of the posterior mean using the D-prior is visually unappealing. Inspection of the traceplot for the parameter $\tau$ in this setting reveals large uncertainty. The LNGL-prior on the other hand seems quite robust in performance, once a sufficiently large number of bins is chosen.

\subsubsection{Monte-Carlo study}

To compare the performance of the posterior mean estimator under both prior specifications we conduct a Monte-Carlo study. 
In each simulation run we
\begin{itemize}
	\item  simulate a dataset;
	\item compute the posterior mean under both the  D- and LNGL-prior;
	\item  compute the Wasserstein distance (based on the $\ell_1$/cityblock distance) between the probability vector of bin-masses of the estimate and the probability vector of bin masses of the true data-generating distribution.
\end{itemize}
We considered samples sizes
$100, 250, 500$ and took the Monte-Carlo sample size equal to $100$. In figures \ref{fig:resB_mc1} and \ref{fig:resB_mc2} we give two visualisations of the results. As expected, with large sample size the distance tends to be smaller. Additionally, the LNGL-prior appears to outperform the D-prior for this choice of data-generating distribution. 

\begin{figure}
	\begin{center}
		\includegraphics[scale=0.7]{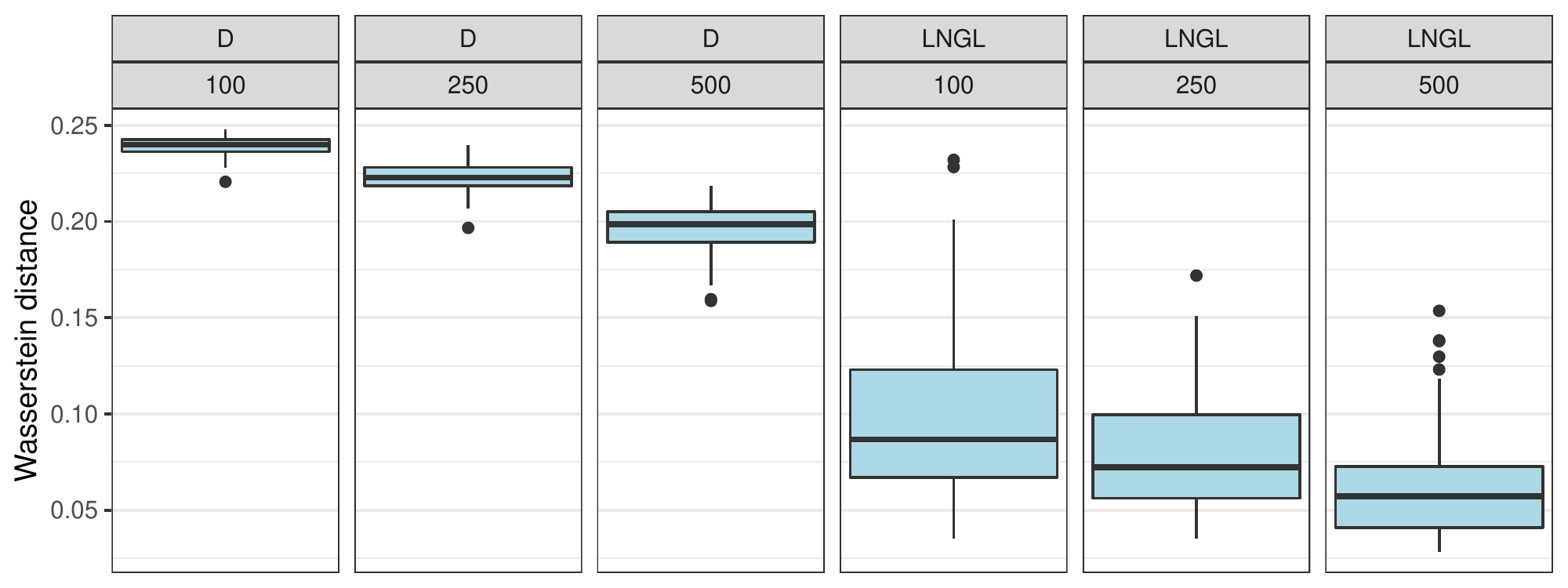}
	\end{center}
	\caption{Simulation study. Wasserstein distance, averaged over $100$ Monte-Carlo samples for samples sizes $100, 250, 500$ and both the D- and LNGL-prior. \label{fig:resB_mc1}}
\end{figure}

\begin{figure}
	\begin{center}
				\includegraphics[scale=1.0]{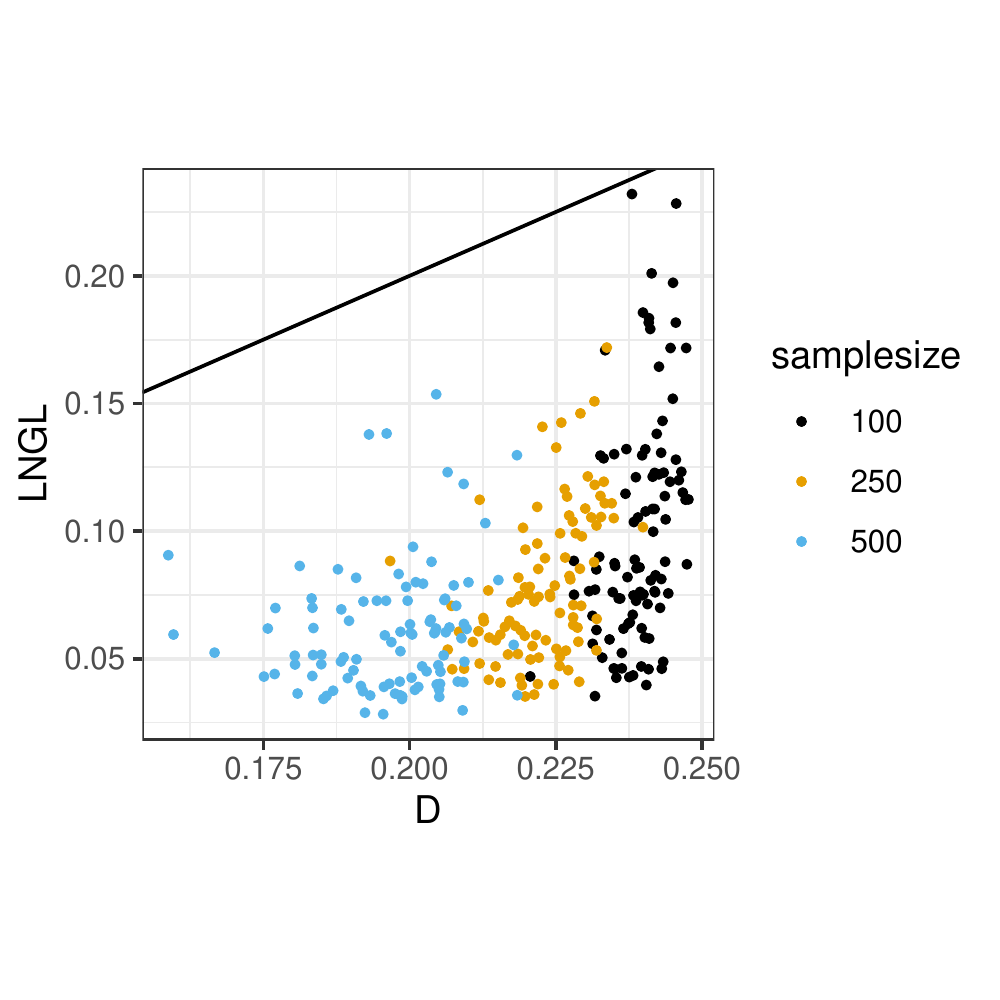}
	\end{center}
	\caption{Simulation study. Wasserstein distance for simulated datasets  for samples sizes $100, 250, 500$ and both the D- and LNGL-prior. Added is the line with intercept $0$ and slope $1$.  \label{fig:resB_mc2}}
\end{figure}

\section{Discussion}
The main theoretical contribution of this paper is Theorem \ref{thm:CSCMconsistency}.  We expect the  the rate in this theorem to be suboptimal. We  conjecture  $\eps_n$ to be the optimal rate, suggesting that  Lemma \ref{lem:KL_3} is sufficiently  sharp. However, due to the tests constructed in  Lemma \ref{lem:KL_3} we obtain  rate $\eps^{2/3}$ in our main result. We postpone further investigations to future research, where we also hope to obtain  rates in  global metrics.

The derived rate is non-adaptive. Adaptation can be achieved by employing a prior on the number of bins, see Chapter 10 in \cit{GhoVaart}. Computationally, such an approach is less attractive, as it requires a sampler on the larger space of the union of partitions. Instead, in our numerical work we have chosen to start off  from a larger number of small bins, and have a prior on the scaling parameter $\tau$ take care of the regularisation. This yields an easily implementable and efficient computational scheme.


\appendix
\section{Technical proofs}

\begin{lem}\label{lem:L1bound}
Let $f_1$ and $f_2$ be bivariate density functions on $\scr{M}=[0,M_1] \times [0,M_2]$. Assume the density of the censoring time, $g$, is bounded. Then there exists a constant $C>0$, independent of $f_1$ and $f_2$ such that 
\[	\|s_{f_1}-s_{f_2}||_1	\le C \|f_1-f_2\|_\infty. \]
\end{lem}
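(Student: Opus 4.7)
The plan is to split the $L^1$ norm into its two pieces coming from the definition of the dominating measure $\mu$ in \eqref{eq:mu}, and bound each piece by pulling out the sup norm of $f_1 - f_2$.

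First I would write out, using \eqref{eq:density} and the two-part structure of $\mu$,
\[
\|s_{f_1}-s_{f_2}\|_1 = \int_0^{M_1}\!\!\int_0^{M_2} g(t)\bigl|\partial_2 F_1(t,z)-\partial_2 F_2(t,z)\bigr|\,\dd z\,\dd t + \int_0^{M_1} g(t)\bigl|F_{1,X}(t)-F_{2,X}(t)\bigr|\,\dd t.
\]
Then, since $\partial_2 F_i(t,z)=\int_0^t f_i(u,z)\dd u$ and $F_{i,X}(t) = \int_0^t\!\int_0^{M_2} f_i(u,v)\dd v\dd u$, each of the two integrands inside the absolute value is a Lebesgue integral of $f_1 - f_2$ over a rectangle contained in $\mathcal{M}$. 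Pulling the sup norm out of these integrals yields
\[
\bigl|\partial_2 F_1(t,z)-\partial_2 F_2(t,z)\bigr| \le M_1 \|f_1-f_2\|_\infty,\qquad \bigl|F_{1,X}(t)-F_{2,X}(t)\bigr|\le M_1 M_2 \|f_1-f_2\|_\infty.
\]
Finally, using the assumed upper bound $\ol K$ on $g$ and integrating the constant bounds over $[0,M_1]\times[0,M_2]$ and $[0,M_1]$ respectively, both pieces are majorised by a constant multiple of $\|f_1-f_2\|_\infty$, and adding them gives the claim with $C = 2\ol K M_1^2 M_2$.

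There is no real obstacle here; the result is a direct bookkeeping calculation, and the only thing to be careful about is not forgetting the singular $\{z=0\}$ part of $\mu$ when expanding $\|s_{f_1}-s_{f_2}\|_1$.
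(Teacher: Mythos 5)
Your proof is correct and follows essentially the same strategy as the paper's: bound the pointwise difference $|s_{f_1}-s_{f_2}|$ by pulling $\|f_1-f_2\|_\infty$ out of the defining integrals of $\partial_2 F$ and $F_X$, use boundedness of $g$, and integrate over the compact domain. The paper collapses the two cases $\{z>0\}$ and $\{z=0\}$ into a single $\max$ before integrating against $\mu$, whereas you split the $L^1$ norm into its two pieces first and track the constants more explicitly, but the underlying calculation is identical.
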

\begin{proof}
Say $g$ is bounded by $\bar{K}$. 
If $(t,z) \in \scr{M}$, then 
\begin{align*}
|s_{f_1}(t,z)-s_{f_2}(t,z)|&=\Bigg|g(t)\Bigg(\ind_{\{z>0\}}\int_0^t(f_1(u,z)-f_2(u,z))\dd u\\
&\quad+\ind_{\{z=0\}}\int_t^{M_1}\int_0^{M_2}(f_1(u,v)-f_2(u,v))\dd v\dd u\Bigg)\Bigg|\\
 & \le \bar{K} \max\left\{ \int_0^t \left|f_1(u,z)-f_2(u,z)\right|\dd u, \int_t^{M_1}\int_0^{M_2}\left|f_1(u,v)-f_2(u,v)\right|\dd v\dd u\right\} \\ & \le \bar{K}\, |\scr{M}|\,
\|f_1-f_2\|_\infty.
\end{align*}
This implies
\begin{equation}\label{eq:L1_h}\|s_{f_1}-s_{f_2}||_1=\int_{\mathcal{M}} |s_{f_1}-s_{f_2}|\dd\mu\le \bar{K}\, |\scr{M}|\, \mu(\scr{M})\, \|f_1-f_2\|_\infty.\end{equation}
\end{proof}

\begin{lem}\label{lem:subset_omega}
Impose Assumption \ref{CSCMasmp_f0}. 
Let $f_{0,n}$ be as defined in (\ref{eq:f0n_def}).
Define set \[
\Omega_n:=\left\{f\in\mathcal{F}: ||f-f_{0,n}||_\infty\le C\delta_n^{\rho},\, \mbox{supp}(f)\supseteq\mathcal{M}\right\}.\]
If $f \in \Omega_n$, then for sufficiently large $n$ there exists a constant $C_1>0$ such that 
\[
KL(s_{f_0},s_f)\le C_1\delta_n^{\rho},\quad V(s_{f_0},s_f)\le C_1\delta_n^{\rho}.\]
\end{lem}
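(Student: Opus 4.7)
The plan is to deduce the two divergence bounds from a pair of simpler ingredients: (i) a sup-norm control of $f-f_0$ on $\mathcal{M}$, converted into an $L^1$-control of $s_f-s_{f_0}$ via Lemma \ref{lem:L1bound}; and (ii) a uniform upper bound on the ratio $s_{f_0}/s_f$, which will let me upgrade the $L^1$ bound to the Kullback-Leibler and $V$ bounds through a standard ``bounded-ratio'' inequality from Ghosal--van der Vaart.

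First I would estimate $\|f_0-f_{0,n}\|_\infty$. Since $f_{0,n}$ is the cell-average of $f_0$ on bins of side length $\delta_n$, the $\rho$-H\"older property in Assumption \ref{CSCMasmp_f0} immediately gives $\|f_0-f_{0,n}\|_\infty \le c'\delta_n^{\rho}$ for a constant depending only on $c$ and the bin geometry. Together with $f\in\Omega_n$, the triangle inequality then yields $\|f-f_0\|_\infty \le (C+c')\delta_n^{\rho}$. Plugging this into Lemma \ref{lem:L1bound} produces $\|s_f-s_{f_0}\|_1 \le C_2\,\delta_n^{\rho}$, and the standard inequality $h^2(p,q)\le \tfrac12\|p-q\|_1$ then gives $h^2(s_{f_0},s_f)\le C_3\,\delta_n^{\rho}$.

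Next I would establish that $\|s_{f_0}/s_f\|_\infty$ is bounded by a constant independent of $n$. Because $f\in\Omega_n$ and $\|f_{0,n}-f_0\|_\infty$ is small, for $n$ large enough we have $f(x,y)\ge \underline{M}/2$ on $\mathcal{M}$. The ratio then splits into two cases according to the measure $\mu$ in \eqref{eq:mu}. For $z>0$,
\[
\frac{s_{f_0}(t,z)}{s_f(t,z)}=\frac{\int_0^t f_0(u,z)\,\mathrm{d}u}{\int_0^t f(u,z)\,\mathrm{d}u}\le \frac{\overline{M}}{\underline{M}/2}=\frac{2\overline{M}}{\underline{M}},
\]
while for $z=0$ the ratio becomes $(1-F_{0,X}(t))/(1-F_X(t))$, and the same uniform bounds on $f_0$ and $f$ give the identical bound $2\overline{M}/\underline{M}$. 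The point here is that the possibly vanishing numerator and denominator (near the boundary $t=0$ or $t=M_1$) disappear in the ratio; this boundary behaviour is the step I expect to require the most care.

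Finally I would invoke the standard lemma (see e.g.\ Lemma B.2 in \cit{GhoVaart}) that whenever $M:=\|p/q\|_\infty <\infty$ one has
\[
KL(p,q)\lesssim h^2(p,q)\,(1+\log M), \qquad V(p,q)\lesssim h^2(p,q)\,(1+\log M)^2.
\]
Applied with $p=s_{f_0}$, $q=s_f$ and $M$ the $n$-independent constant derived above, this yields $KL(s_{f_0},s_f)\lesssim h^2(s_{f_0},s_f)\le C_3\,\delta_n^{\rho}$ and $V(s_{f_0},s_f)\lesssim h^2(s_{f_0},s_f)\le C_3\,\delta_n^{\rho}$, so that a single constant $C_1$ absorbs the multiplicative factors and gives the claimed bound for both quantities. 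The main technical obstacle is the uniform bound on $s_{f_0}/s_f$ in both components of $\mu$; the remaining steps are essentially triangle inequality plus a direct appeal to Lemma \ref{lem:L1bound} and classical divergence comparisons.
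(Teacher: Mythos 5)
Your proposal is correct and follows essentially the same route as the paper: bound $\|f_0 - f_{0,n}\|_\infty$ via H\"older continuity, pass to $\|s_{f_0}-s_f\|_1$ through Lemma~\ref{lem:L1bound}, convert to a Hellinger bound, establish a uniform bound on $\|s_{f_0}/s_f\|_\infty$, and close with Lemma~B.2 of Ghosal--van der Vaart. The only cosmetic differences are that you bound $s_{f_0}/s_f$ by controlling numerator and denominator separately (the paper instead notes $\|s_{f_0}/s_f\|_\infty \le \|f_0/f\|_\infty$ and bounds the latter), and you quote the $(1+\log M)$ form of Lemma~B.2 while the paper quotes a version with $\|s_{f_0}/s_f\|_\infty$ as a multiplicative factor---both variants give the desired result since the ratio is $O(1)$.
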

\begin{proof}
The proof is based on Lemma B.2 in \cit{GhoVaart}, which gives the following inequalities
\begin{equation}
\label{eq:KL_V}
\begin{split}
KL(s_{f_0},s_f)&\le 2 h^2(s_{f_0},s_f)\|s_{f_0}/s_f\|_\infty,\\
V(s_{f_0},s_f)&\le 2 h^2(s_{f_0},s_f)\|s_{f_0}/s_f\|_\infty.
\end{split}
\end{equation}
Therefore, for $f\in \Omega_n$ we bound $h^2(s_{f_0}, s_f)$ and $||s_{f_0}/s_f||_\infty$. Substituting these bounds in \eqref{eq:KL_V} finishes the proof. 

 {\bf Step 1: showing that $h^2(s_{f_0}, s_f)\lesssim \delta_n^\rho$. }
By the definition of $f_{0,n}$ in (\ref{eq:f0n_def}), for any $(t,z)\in A_{n,j}\times B_{n,k}$,
\begin{align*}
|f_{0,n}(t,z)-f_0(t,z)|&=\left||A_{n,j}\times B_{n,k}|^{-1}\int_{A_{n,j}}\int_{B_{n,k}}f_0(u,v)dvdu-f_0(t,z)\right|\\
&\le|A_{n,j}\times B_{n,k}|^{-1}\int_{A_{n,j}}\int_{B_{n,k}}|f_0(u,v)-f_0(t,z)|\dd v\dd u\\
&\le \max_{(u,v)\in A_{n,j}\times B_{n,k}}|f_0(u,v)-f_0(t,z)|.
\end{align*}
By assumption (\ref{eq:f0cntns}) on $f_0$, we have
\[\max_{(u,v)\in A_{n,j}\times B_{n,k}}|f_0(u,v)-f_0(t,z)|\le c\max_{(u,v)\in A_{n,j}\times B_{n,k}}||(u,v)-(t,z)||^\rho\le L(2\sqrt{2}\delta_n)^\rho.\]
Hence
\begin{equation}\label{eq:f0f0n}
||f_{0,n}-f_0||_\infty =\max_{j,k} \left|\max_{(t,z)\in A_{n,j}\times B_{n,k}}|f_{0,n}(t,z)-f_0(t,z)| \right|\le  c(2\sqrt{2}\delta_n)^\rho.
\end{equation}
Applying Lemma \ref{lem:L1bound} with $f_1\equiv f_0$ and $f_2 \equiv f \in \Omega_n$ gives
\[||s_{f_0}-s_{f}||_1\le C(\|f_0-f_{0,n}\|_\infty+\|f_{0,n}-f\|_\infty)\lesssim \delta_n^{\rho}.\]
where we used \eqref{eq:f0f0n} to bound the first term on the right-hand-side. Using the inequality $h^2(f_1,f_2)\le \frac1{2}||f_1-f_2||_1$, we then have
\begin{equation}\label{eq:hellinger}h^2(s_{f_0},s_f)\le \frac1{2}||s_{f_0}-s_{f}||_1\lesssim \delta_n^{\rho}.\end{equation}

{\bf Step 2. showing that for sufficiently large $n$ there exists a constant $\tilde c>0$ such that    $||s_{f_0}/s_f||_\infty\le \tilde c$. }

First note that
\begin{equation}\label{eq:infty}
\left\|\frac{s_{f_0}}{s_f}\right\|_\infty\le\max\left\{\left\|\frac{\partial_2F_0}{\partial_2F}\right\|_\infty,\left\|\frac{1-F_{0,X}}{1-F_X}\right\|_\infty\right\}
\le \left\|\frac{f_0}{f}\right\|_\infty
\end{equation}
By Assumption \ref{CSCMasmp_f0}, there exists $\ul{M}$ such that $f_0(t,z)\ge \ul{M}$ for $(t,z) \in \scr{M}$. 
 Since $f\in \Omega_n$ we have $f(t,z)\ge f_{0,n}(t,z) - C\delta_n^\rho$. 
By Equation \eqref{eq:f0f0n}, there exists a $k$ (depending on $c$ and $\rho$) such that 
\[ |f_0(t,z) -f_{0,n}(t,z) | \le k \delta_n^\rho. \]
Therefore
\[  \frac{f_0(t,z)}{f(t,z)}  \le \frac{f_0(t,z)}{f_{0,n}(t,z)-C\delta_n^\rho} \le \frac{f_{0,n}(t,z) + k \delta_n^\rho}{f_{0,n}(t,z)-C\delta_n^\rho} \]
For $n$ sufficiently large we will have $C\delta_n^\rho \le \ul{M}/2$ and therefore the denominator of the right-hand-side can be lower bounded by $\ul{M}/2$. This implies boundedness of $\|f_0/f\|_\infty$  for $n$ sufficiently large.

\end{proof}

\section{Programming details in the Turing language}\label{sec:turing}
\label{sec:hmc}
For each observation, indexed by $i \in \{1,\ldots, n\}$, we compute the vector of indices corresponding to the shaded area in Figure \ref{fig:censoring-sketch}, as well as the fraction of the area that is shaded. This means that there is an $n$-dimensional vector {\tt ci}, where each element is of type ({\bf C}ensoring{\bf I}nformation). The structure {\bf C}ensoring{\bf I}nformation has two fields: {\tt ind} and {\tt fracarea}, holding indices and areafractions respectively. 
If {\tt L} is the graph-Laplacian (with a small multiple of the identity matrix added), the model is specified as follows:
\begin{lstlisting}
@model GraphLaplacianMod(ci,L) = begin
    tau ~ Exponential(1.0)
    H ~ MvNormalCanon(L/tau)
    Turing.@addlogprob! loglik(H, ci)
end
\end{lstlisting}
Here the loglikelihood is calculated using
\begin{lstlisting}
function loglik(H, ci)
    theta = softmax(H)
    ll = 0.0
    @inbounds for i in eachindex(ci)
        c = ci[i]
        ll += log(dot(theta[c.ind], c.fracarea))
    end
    ll
end
\end{lstlisting}

%

\bibliographystyle{plainnat}

\end{document}